\renewcommand{\epsilon}{\varepsilon}
\newcommand{\newsection}[1]
{\subsection{#1}\setcounter{theorem}{0} \setcounter{equation}{0}
\par\noindent}
\newtheorem{theorem}{Theorem}
\newtheorem{lemma}[theorem]{Lemma}
\newtheorem{corr}[theorem]{Corollary}
\newtheorem{proposition}[theorem]{Proposition}
\newtheorem{deff}[theorem]{Definition}
\newtheorem{exmp}[theorem]{Example}
\newcommand{\bth}{\begin{theorem}}
\newcommand{\ble}{\begin{lemma}}
\newcommand{\bcor}{\begin{corr}}
\newcommand{\bdeff}{\begin{deff}}
\newcommand{\bprop}{\begin{proposition}}
\newcommand{\ele}{\end{lemma}}
\newcommand{\ecor}{\end{corr}}
\newcommand{\edeff}{\end{deff}}
\newcommand{\eprop}{\end{proposition}}
\newcommand{\supp}{\text{supp }}
\renewcommand{\Pi}{\varPi}
\renewcommand{\epsilon}{\varepsilon}
\newcommand{\R}{{\mathbb R}}
\newcommand{\vol}{\operatorname{vol}}
\newcommand{\Z}{{\mathbb Z}}
\newcommand{\N}{{\mathbb N}}
\newcommand{\T}{{\mathbb T}}
\newcommand*{\rom}[1]{\expandafter\@slowromancap\romannumeral #1@}
\begin{document}

\title[Improved Estimates on Product Manifolds]
{Weyl formula improvement for product of zoll manifolds}
%
%
%

\author[]{Yanfei Wang}
\address[Y.F.W.]{Department of Mathematics,  Johns Hopkins University,
Baltimore, MD 21218}
\email{ywang513@jh.edu}

\begin{abstract}
Iosevich and Wyman have proved in ~\cite{IoWy} that the remainder term in classical Weyl law can be improved from $O(\lambda^{d-1})$ to $o(\lambda^{d-1})$ in the case of product manifold by using a famous result of Duistermaat and Guillemin. They also showed that we could have polynomial improvement in the special case of Cartesian product of round spheres by reducing the problem to the study of the distribution of weighted integer lattice points. In this paper, we show that we can extend this result to the case of Cartesian product of Zoll manifolds by investigating the eigenvalue clusters of Zoll manifold and reducing the problem to the study of the distribution of weighted integer lattice points too.
\end{abstract}

\maketitle


\newsection{Introduction}


Let M be a compact Riemannian manifold without boundary. $\Delta_M$ denote the Laplace-Beltrami operator of this manifold. We can use spectral theorem to decompose 
\[
    L^2(M) =  \bigoplus_{\lambda \in \Lambda} E_\lambda ,
\]
where
\[
    \Lambda = \{\lambda \in [0,\infty) : -\lambda^2 \text{ is an eigenvalue of $\Delta$}\}
\]
is the spectrum of $\sqrt{-\Delta_M}$ and $E_{\lambda}$ is the eigenspace corresponding to the eigenvalue $\lambda$. The dimension of $E_{\lambda}$ is called the multiplicity of $\lambda$ and will be denoted $\mu(\lambda)$. The Weyl counting function is usually defined by:
\[
N(\lambda) = \sum_{\lambda'\in \Lambda \cap [0,\lambda]} \mu(\lambda')
\]
and we have the integrated Weyl law,
\begin{equation}\label{the weyl law}
    N(\lambda) = \frac{|B_d| \vol M}{(2\pi)^d} \lambda^{d} + O(\lambda^{d - 1})
\end{equation}
where d is the dimension of M , $\vol M$ is the volume of M and $|B_d|$ is the volume of the unit ball in $R^d$.

We also have the piontwise Weyl law uniformly for $x\in M$, which can deduce the intergrated version above:
\[
\textbf{1}(- \Delta_g \le \lambda^2)(x,x) = \frac{|B_d|}{(2\pi)^d} \lambda^{d} + O(\lambda^{d - 1}) 
\]
On closed manifold, the sharp pointwise Weyl law is due to Avakumovic ~\cite{Avakumovic}, Levitan~\cite{Levitan}, and Hormander~\cite{Hor68}. Here, sharp means that we can find manifolds such that the remainder $O(\lambda^{d-1})$, can not be improved, for example, on spheres and Zoll manifolds. We will briefly discuss the spectrum of Laplacian on round spheres and Zoll manifolds in the following examples.

\begin{exmp}(Round Sphere) The spectrum of the Laplace-Beltrami operator on $S^d$ is given by 
\[
    \Lambda = \left\{\sqrt{k(k + d - 1)} : k \in \Z_{\geq 0} \right\}
\]
with corresponding multiplicities
\begin{equation}\label{sphere spectrum}
    \mu(\sqrt{k(k + d - 1)}) = \binom{d+k}{d} - \binom{d+k-2}{d} = \frac{2}{(d-1)!} k^{d-1} + O(k^{d-2}).
\end{equation}
This means that the Weyl conunting function will have jumps of order $O(\lambda^{d-1})$ which saturates the standard remainder term. (See Sogge ~\cite{soggefica} section 3.4.)
\end{exmp}

The big-O term in the Weyl law can be improved under some additional geometric assumptions for the manifold. For example, from the work of Duistermaat and Guillemin ~\cite{DuistermaatGuillemin} we can get a qualitative improvement if the set of closed geodesics is small enough. We will have a $\log \lambda$ improvement if the sectional curvature is nonpositive, see Berard ~\cite{Berard}. From a recent work of Canzani and Galkowski ~\cite{CanGal}, we can obtain a $\log \lambda$ improvement in the case of product manifolds. Taylor, Sogge and Huang proved that improved Weyl remainder terms bound on $Y$ could lead to a imporved Weyl remainder terms bound on $X\times Y$ in \cite{HuangTaylorSogge}. From a recent work of Wyman and Iosevich~\cite{IoWy}, we have a polynomial improvement if we consider product of spheres. They also raise a conjecture that we can obtain polynomial improvements for product of general manifolds.

The main purpose of this papaer is to extend the rusults of Wyman and Iosevich for product of spheres to the product of Zoll manifolds. Before stating our theorem, we want to briefly introduce some old results about Zoll manifold which will be useful in our proof.

From here, the manifold Z is a so-called Zoll manifold, a compact Riemannian manifold without boundary all of whose geodesics are closed with the same common period T. To normalize it, we let $T=2\pi$. (See ~\cite{Zollmanifold} for a thorough study of such manifolds.) The spectrum of Zoll manifolds is similar to the spectrum of round spheres, except for a major difference. Sphere spectrum will accumulate to a series of points while Zoll spectrum is just clustered in a series of intervals whose length will shrink in the speed of $O(1/k)$ (see Duistermaat and Guillemin~\cite{DuistermaatGuillemin} and Weinstein ~\cite{Wein77} for more details). The number of eigenvalues inside these intervals will also be similar to the sphere case. For the sphere case, Iosevich and Wyman~\cite{IoWy} has shown us we can deduce from \eqref{sphere spectrum} that 
\begin{equation*}
    P_{S^d}\left(k + \frac{d-1}{2}\right) = \frac{2}{(d-1)!}\left(k + \frac{d-1}{2}\right)\frac{(k+d-2)!}{k!} \qquad \text{ if $k \geq 2$.}
\end{equation*}
where $ P_{S^d}(k + (d-1)/2)$ is the number of eigenvalues inside the eigenspace $E_k$. We want to analyze the order of this polynomial, i.e. the asymptotic behavior of the eigenvalue distribution. Here, if you consider P as a polynomial of k, then \eqref{sphere spectrum} is the best asymptotics you can get. However, if you change your variable to $t=k+\frac{d-1}{2}$, you will get:
\begin{equation} \label{polynomial}
    P_{S^d}(t) = \frac{2}{(d-1)!} t^{d-1} + O(t^{d-3}) \qquad \text{ for $|t|$ large.}
\end{equation}
This fact, at our first glimpse, may rely on the special property of combinatorial number. However, \eqref{sphere spectrum} can be computed use combinatorial numbers is just because the round sphere have very good symmetry in all directions. (See ~\cite{SoggeHangzhou} for details.) Although Zoll manifold does not have this geometric symmetry, we can prove some similar result as \eqref{polynomial} on Zoll manifold. 

\begin{exmp}\label{Zoll Manifold}(Zoll Manifold)
Let Z be a d dimensional Zoll manifold all of whose geodesics have the same period $2\pi$. The spectrum of its Laplace-Beltrami operator $-\Delta$ lies in clusters about the squares $(k + \frac{\alpha}{4})^2$. Where $\alpha$ is the Maslov index of this Zoll manifold. For all eigenvalues $\sqrt{\nu_{n(k,j)}}$ of $\sqrt{-\Delta_Z}$ inside the k'th clusters, we have the following estimates,
\begin{equation}\label{Zollcluster}
     \sqrt{\nu_{n(k,j)}} =  k + \frac{\alpha}{4 } + O(k^{-1})
\end{equation}
See Zelditch ~\cite{FSZS} for more details.
\end{exmp}

\begin{lemma}\label{zec}
    Z is still our Zoll manifold defined as above, the number of eigenvalues in the k'th cluster as a polynomial of $k+\frac{\alpha}{4}$ is 
    \begin{equation} \label{zolclu}
    P_{Z^d}(t) = C \cdot t^{d-1} + O(t^{d-3}) \qquad \text{ for $|t|$ large.}
    \end{equation}
    where $t=k + \frac{\alpha}{4}$ and C is a constant depend on Z.
\end{lemma}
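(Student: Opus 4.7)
The plan is to apply wave-trace/Fourier analysis directly to $\sqrt{-\Delta_Z}$, using the $2\pi$-periodicity of its bicharacteristic flow to localize the trace singularity and the vanishing of its sub-principal symbol to force the $(k+\alpha/4)^{d-2}$ term to cancel.

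I would start by picking an even Schwartz function $\rho$ with $\rho(0)=1$ and $\hat\rho$ supported in $(-\pi,\pi)$, and form the smoothed spectral sum $S(\lambda) = \sum_{j} \rho(\lambda - \lambda_j)$ over eigenvalues $\lambda_j$ of $\sqrt{-\Delta_Z}$ counted with multiplicity. By Example~\ref{Zoll Manifold}, every eigenvalue in the $k$-th cluster lies within $O(k^{-1})$ of $k + \alpha/4$ and the cluster size is $O((k+\alpha/4)^{d-1})$; since $\rho$ is even (so $\rho'(0)=0$), Taylor expanding $\rho$ to second order gives
\[
S(k + \alpha/4) = P_{Z^d}(k+\alpha/4) + O\bigl((k+\alpha/4)^{d-3}\bigr).
\]

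Next, Fourier invert
\[
S(\lambda) = \frac{1}{2\pi}\int_{-\infty}^{\infty} \hat\rho(t)\,e^{i\lambda t}\,\operatorname{tr}\bigl(e^{-it\sqrt{-\Delta_Z}}\bigr)\,dt.
\]
Since every geodesic on $Z$ has common period $2\pi$, the singular support of $\operatorname{tr}(e^{-it\sqrt{-\Delta_Z}})$ is contained in $2\pi\Z$, and $\operatorname{supp}\hat\rho \subset (-\pi,\pi)$ isolates only the singularity at $t=0$. There, Hörmander's wave-parametrix construction gives the asymptotic expansion
\[
\operatorname{tr}\bigl(e^{-it\sqrt{-\Delta_Z}}\bigr) \sim \sum_{j\ge 0} a_j\,(it+0)^{j-d},
\]
whose coefficients $a_j$ are integrals over $S^*Z$ of local symbol invariants. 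Using that the Fourier transform of the homogeneous distribution $(it+0)^{j-d}$ equals $C_{j,d}\,\lambda_+^{d-1-j}$ for $j<d$ (and contributes only derivatives of $\delta$ for $j\ge d$, hence rapid decay as $\lambda\to+\infty$), I get
\[
S(\lambda) = \sum_{j=0}^{d-1} c_j\,\lambda^{d-1-j} + O(\lambda^{-\infty}),
\]
and therefore
\[
P_{Z^d}(k+\alpha/4) = c_0(k+\alpha/4)^{d-1} + c_1(k+\alpha/4)^{d-2} + O\bigl((k+\alpha/4)^{d-3}\bigr).
\]

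The heart of the argument — and the main obstacle — is to verify $c_1 = 0$. Up to a nonzero constant, $c_1$ is proportional to $a_1$, and a standard computation identifies $a_1$ with the integral over $S^*Z$ of the sub-principal symbol of $\sqrt{-\Delta_Z}$. The key observation is that this sub-principal symbol vanishes identically: writing $P = \sqrt{-\Delta_Z}$ and applying the composition rule $\sigma^{\text{sub}}(P^2) = 2p\,\sigma^{\text{sub}}(P) + \tfrac{1}{2i}\{p,p\}$, together with $\{p,p\}=0$, $\sigma^{\text{sub}}(-\Delta_Z)=0$, and $p=|\xi|_g \neq 0$ off the zero section, forces $\sigma^{\text{sub}}(P)=0$. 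Hence $a_1=0$, so $c_1=0$, and we conclude $P_{Z^d}(t) = C\,t^{d-1} + O(t^{d-3})$ with $C=c_0>0$ a constant depending on $Z$ (matching the Weyl constant divided by the unit cluster spacing). The delicate technical work is the justification of the termwise Fourier inversion down to the $(d-2)$-th order and the precise identification $c_1 \propto a_1 \propto \int_{S^*Z}\sigma^{\text{sub}}(\sqrt{-\Delta_Z})$; once these are in hand, the conclusion follows.
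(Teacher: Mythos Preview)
Your approach shares the paper's core idea---use the wave-trace expansion and the vanishing of the subprincipal symbol---but there is a genuine gap in the step where you claim $S(k+\alpha/4) = P_{Z^d}(k+\alpha/4) + O((k+\alpha/4)^{d-3})$. The sum $S(k+\alpha/4) = \sum_j \rho(k+\alpha/4 - \lambda_j)$ runs over \emph{all} eigenvalues, not just those in the $k$-th cluster. Your Taylor argument with $\rho'(0)=0$ correctly handles the $k$-th cluster, but the $l$-th cluster ($l\neq k$) contributes $P_l\,\rho(k-l) + O(l^{d-2})$. Since $\hat\rho$ is compactly supported in $(-\pi,\pi)$, $\rho$ cannot vanish at all nonzero integers (Poisson summation would then force $\hat\rho = \mathbf{1}_{(-\pi,\pi)}$, which is not smooth), so these cross-cluster terms are of size $\sim k^{d-1}\rho(\pm 1)$, the same order as the main term. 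One can try to view $S \approx \rho * \tilde P$ for the comb $\tilde P = \sum_l P_l\,\delta_{l+\alpha/4}$ and deconvolve, but recovering $P_k$ to $O(k^{d-3})$ accuracy then requires control on the first band moment $\sum_{j\in l\text{-th cluster}}(\lambda_j - l - \alpha/4)$ down to $O(l^{d-3})$, which is as hard as the statement itself.

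The paper closes this gap by not working with $\sqrt{-\Delta_Z}$ directly. It constructs (following Weinstein and Duistermaat--Guillemin) a first-order operator $A$, differing from $\sqrt{-\Delta_Z+c}$ by an operator of order $-1$, whose spectrum sits \emph{exactly} on $\{k+\alpha/4\}$; then $d_k$ is an honest eigenspace dimension and appears directly as the $k$-th Fourier coefficient of the periodic trace $\operatorname{tr}\,e^{-it(A-\alpha/4)}$ on the circle. Proposition~\ref{we77} then gives a term-by-term expansion of $d_k$ with no cross-cluster contamination, and the second coefficient is identified with $(1-d)\int_{S^*Z}\sigma^{\mathrm{sub}}(A)$, which vanishes by the same computation you give. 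So your subprincipal-symbol step is correct and matches the paper's; what is missing is the passage from the real-line smoothed count $S(\lambda)$ to the individual cluster size, and that is precisely what the periodic-operator construction supplies.
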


With the aboving lemma, we can prove the following theorem, which tells us that we can have polynomial improvements for the remainder term of Weyl law if our manifold is a Cartesian product of zoll manifold.

\begin{theorem}\label{zollweyl}
    Let $M = Z^{d_1} \times \cdots \times Z^{d_n}$ be a product of $n$ Zoll manifolds of dimension $d_i \geq 1$ for $i = 1,\ldots,n$. Let $\Delta$ be the Laplace-Beltrami operator with respect to the product metric on $M$. If $-\lambda_j^2$ for $j = 1,2,\ldots$ are the eigenvalues of $\Delta$ repeated with multiplicity, then we have bounds
    \[
        N(\lambda) := \#\{ j : \lambda_j \leq \lambda \} = \frac{|B_{|d|}|}{(2\pi)^{|d|}} \vol(M) \lambda^{|d|} + O(\lambda^{|d|-1-\frac{n-1}{n+1}}).
    \]
    where $|d|$ denotes $\dim M = d_1 + \cdots + d_n$.
\end{theorem}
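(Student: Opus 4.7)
The strategy is to group the eigenvalues of $M$ by the cluster indices of the factors and reduce to a weighted lattice-point count in a Euclidean ball, following the template Iosevich--Wyman used for products of spheres. Since eigenfunctions of $-\Delta_M$ are tensor products of eigenfunctions on each factor, every eigenvalue has the form $\nu^{(1)}+\cdots+\nu^{(n)}$ with $\nu^{(i)}$ lying in the $k_i$-th cluster of $Z^{d_i}$ for some $k_i\in\N$. Writing $t_i = k_i+\alpha_i/4$ and $T(\vec k) := \sqrt{t_1^2+\cdots+t_n^2}$, the cluster estimate \eqref{Zollcluster} applied to each factor gives $\sqrt{\nu^{(i)}} = t_i + O(t_i^{-1})$, so
\[
\lambda_j^2 = T(\vec k)^2 + O(1), \qquad \lambda_j = T(\vec k) + O(T(\vec k)^{-1}),
\]
and by Lemma~\ref{zec} the total number of such eigenvalues with fixed cluster data $\vec k$ is $\prod_{i=1}^n P_{Z^{d_i}}(t_i)$.

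My first step would be to compare $N(\lambda)$ with the weighted lattice sum
\[
W(\lambda) := \sum_{\vec k\in\N^n,\ T(\vec k)\leq\lambda}\ \prod_{i=1}^n P_{Z^{d_i}}(t_i).
\]
The discrepancy is bounded by the total weight over the shell $|T(\vec k)-\lambda|\leq C\lambda^{-1}$; that shell contains $O(\lambda^{n-2})$ lattice points, each carrying weight $O(\lambda^{|d|-n})$, so $|N(\lambda)-W(\lambda)|=O(\lambda^{|d|-2})$, safely below the target remainder. Next I would insert the expansion $P_{Z^{d_i}}(t) = C_i t^{d_i-1} + O(t^{d_i-3})$ from \eqref{zolclu}; any term in the expanded product containing at least one $O(t_i^{d_i-3})$ factor loses two powers of $\lambda$, summing to $O(\lambda^{|d|-2})$, and the remaining principal contribution is the polynomial-weighted lattice sum
\[
\Bigl(\prod_i C_i\Bigr)\sum_{\vec k\in\N^n,\ T(\vec k)\leq\lambda}\ t_1^{d_1-1}\cdots t_n^{d_n-1}.
\]

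This is precisely the weighted ball lattice-point problem solved by Iosevich--Wyman for spheres. Because the boundary sphere in $\R^n$ is smooth with everywhere positive Gaussian curvature, classical bounds of Hlawka / van der Corput type (equivalently, stationary-phase decay of $\widehat{\chi_B}$), combined with partial summation to absorb the polynomial weight, yield the count up to error $O(\lambda^{|d|-1-(n-1)/(n+1)})$. The corresponding integral equals the Weyl main term $\tfrac{|B_{|d|}|}{(2\pi)^{|d|}}\vol(M)\,\lambda^{|d|}$ once the $C_i$ are matched to the classical Weyl coefficient on each $Z^{d_i}$ via \eqref{the weyl law}. The main obstacle is this last weighted lattice-point estimate: the Zoll $O(1/k_i)$ perturbation is rendered harmless by the shell argument above, but obtaining the $(n-1)/(n+1)$ saving uniformly with non-constant polynomial weights, and carrying the convex-body lattice-point bound through partial summation without losing uniformity in $\vec k$, is where the real work lies.
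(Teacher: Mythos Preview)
Your plan is correct and matches the paper's approach closely: reduce via the Zoll cluster structure to a polynomial-weighted lattice count in a ball, control the $O(1/k_i)$ perturbation by a thin-shell estimate, and then invoke the Hlawka-type bound. Two points of difference are worth flagging.

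First, the claim that the shell $|T(\vec k)-\lambda|\le C\lambda^{-1}$ contains $O(\lambda^{n-2})$ lattice points is only a volume heuristic; for shells this thin the naive Gauss remainder $O(\lambda^{n-1})$ does not immediately give it. The paper does not argue this directly either; instead it states the corresponding weighted annulus bound as part of the lattice proposition, and in any case the bound you actually need, $O(\lambda^{|d|-1-(n-1)/(n+1)})$, follows simply by applying the main weighted estimate at radii $\lambda$ and $\lambda\pm c/\lambda$ and subtracting.

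Second, for the weighted lattice-point estimate the paper does \emph{not} use partial summation. It applies Poisson summation directly to $\chi_{\lambda B}\cdot F$ with $F(x)=\prod_{i\le k}x_i^{d_i-1}\chi_{[0,\infty)}(x_i)$, mollifies by $\rho_\epsilon$, and then exploits the homogeneity $\widehat{\chi_{\lambda B}F}(\xi)=\lambda^{|d|}\widehat{\chi_B F}(\lambda\xi)$ together with the factorization $\widehat{\chi_B F}=\widehat{\chi_B}\ast\widehat{\tilde F}$, where $\tilde F$ is $F$ times a cutoff. Stationary phase gives $|\widehat{\chi_B}(\xi)|\lesssim\langle\xi\rangle^{-(n+1)/2}$, and two integrations by parts in each variable give $|\widehat{\tilde F}(\xi)|\lesssim\prod_i\langle\xi_i\rangle^{-2}$; summing over $m\neq 0$ and optimizing $\epsilon=\lambda^{-(n-1)/(n+1)}$ yields the exponent. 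This handles the polynomial weight in one stroke, whereas making a multivariable partial-summation argument uniform is more delicate.
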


\newsection{Proof of Lemma \ref{zec}}

Our proof relies on the work of Weinstein ~\cite{Wein77}, Duistermatt and Guillemin ~\cite{DuistermaatGuillemin}. For the completeness, we will include some theorems of those two papers in this chapter.

First, let $\Delta$ to be the Laplace Beltrami operator for our Zoll manifold Z, we can define an positive elliptic self-adjoint pseudo differential operator $A_0$ such that 
\[
A_0^2 = -\Delta + cI
\]
for some constant c. 

The main idea is to decompose the Laplacian to a square of first order "periodic operator" $A^2$ plus a harmless 0 order remainder term $B$, $-\Delta = A^2 + B$, and use averaging method to modify $B$ such that it can commute with A. Our $A_0$ is not periodic but the bicharacteristic flow of it is simply periodic with the period $2\pi$ since Z is a Zoll manifold all of whose geodesics are closed with the same length $2\pi$. This gives us 
\[
e^{2\pi i A_0}=P
\]
P is a unitary pseudo-differential operator of order 0. Since the subprincipal symbol of $A_0$ is zero (by the equation $A_0^2 = -\Delta +cI$) , the principal symbol of P is simply the constant $e^{\pi i \alpha/2 }$ , where $\alpha$ is the Maslov index of our manifold Z. (See Duistermaat and Guillemin~\cite{DuistermaatGuillemin} , Duistermaat and Hormander~\cite{FIO2} for details, and we will briefly introduce subprincipal symbol and prove $sub(A_0) = 0 $ later in this chapter).

Now we have 
\begin{equation*}
    e^{2\pi i (A_0-\frac{\alpha}{4})} = I + Q
\end{equation*}
where Q is a pseudo differential operator of order -1. The unitray operator $I+Q$ has discrete spectrum approaching 1, so we can choose a single valued branch $\mathbb{L}$ of the complex logarithm which is purely imaginary on this spectrum and such that $\mathbb{L}(1)= 0$. By functional calculus of pseudo differential operator (see Seeley ~\cite{Seeley1967ComplexPO}), we can define:
\begin{equation*}
    T = \frac{1}{2\pi i} \mathbb{L} (I+Q)
\end{equation*}
Where T is  a self-adjoint pseudo differential operator of order 0. And it is not hard to see the principal symbol of T is 0 because $\mathbb{L}(1) = 0$. So T has order -1. Also by definition, T will commute with $A_0$, and we have:
\begin{equation*}
    e^{2\pi i (A_0-\frac{\alpha}{4}-T)} = e^{2\pi i (A_0-\frac{\alpha}{4})} e^{-2\pi i T} = I
\end{equation*}
Now we can define $A = A_0 - T$. A is periodic now since $e^{2\pi i A} = e^{\pi i \alpha /2} I$. A is also elliptic since $A_0 = \sqrt{-\Delta + c}$ and T is of order -1. And we can make our decomposition $-\Delta = A^2 + B$, where B is :
\begin{align*}
    B &= -\Delta - A^2\\
      &= (A_0^2 - cI) - (A_0 -T)^2\\
      &= A_0^2 - cI - A_0^2 + 2TA_0 - T^2\\
      &= -cI + 2TA_0 - T^2
\end{align*}
Here T is self-adjoint of order -1 and $A_0$ is self-adjoint of order 1, so B is self-adjoint of order 0.

The following lemma is to show that we can modify B to a new operator $\Bar{B}$ by averaging, such that $\Bar{B}$ commute with A.
\begin{lemma}\label{com}
    Let A be a first order, self-adjoint, positive ellliptic pseudo differential operator, B is self-adjoint and of order 0. First we define $B_t = e^{-itA} B e^{itA}$, and then define the averaged operator $\Bar{B} = \frac{1}{2\pi}\int_0^{2\pi}B_t dt$. A commute with $\Bar{B}$,  $[A,\Bar{B}] = 0$
\end{lemma}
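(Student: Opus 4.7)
The plan is to exploit the periodicity that is built into $A$ in the construction preceding the lemma: $A$ was arranged so that $e^{2\pi i A} = e^{i\pi \alpha/2} I$, i.e., $e^{2\pi i A}$ is a scalar multiple of the identity. This feature is implicit in the hypothesis and is precisely what makes averaging succeed. The first step will be to deduce from it that $B_t = e^{-itA} B e^{itA}$ is $2\pi$-periodic in $t$: since $e^{\pm 2\pi i A}$ is scalar it commutes with $B$, and writing $B_{t+2\pi} = e^{-i(t+2\pi)A} B e^{i(t+2\pi)A}$ the two scalar factors cancel, yielding $B_{t+2\pi} = B_t$.

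Next I will verify that conjugation by $e^{isA}$ leaves $\bar B$ invariant for every real $s$. A change of variables gives
\[
e^{-isA} \bar B \, e^{isA} = \frac{1}{2\pi}\int_0^{2\pi} e^{-i(s+t)A} B \, e^{i(s+t)A} \, dt = \frac{1}{2\pi}\int_s^{s+2\pi} B_\tau \, d\tau,
\]
and the $2\pi$-periodicity established in the first step makes this integral independent of $s$, so its value equals the one at $s=0$, namely $\bar B$.

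Finally, I differentiate the identity $e^{-isA} \bar B \, e^{isA} = \bar B$ in $s$ at $s=0$. Since $A$ is first-order, self-adjoint, and positive elliptic, $e^{isA}$ is a strongly continuous one-parameter unitary group with infinitesimal generator $iA$, and differentiating on the common invariant core $C^\infty(Z)$ gives
\[
0 = \frac{d}{ds}\bigg|_{s=0}\bigl(e^{-isA} \bar B \, e^{isA}\bigr) = -iA\bar B + i\bar B A = i[\bar B, A],
\]
which is the desired commutation relation. The main but entirely standard technical point will be justifying the differentiation and the smoothness of $s \mapsto e^{-isA} \bar B \, e^{isA}$; this is routine because $\bar B$ is again a zeroth-order pseudodifferential operator and both $e^{isA}$ and $\bar B$ preserve $C^\infty(Z)$, on which $A$ acts as a genuine operator.
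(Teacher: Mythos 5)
Your proof is correct and rests on exactly the same key fact as the paper's: the periodicity $B_{t+2\pi} = B_t$ coming from $e^{2\pi i A}$ being a scalar multiple of the identity. The paper reaches $[A,\bar B]=0$ by writing $[A,B_t] = i\,\frac{d}{dt}B_t$ and integrating over a period to get $B_{2\pi}-B_0=0$, whereas you first show $e^{-isA}\bar B e^{isA}=\bar B$ via a change of variables and periodicity and then differentiate at $s=0$; these are minor reorganizations of the same argument.
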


\begin{proof}
    First, $e^{itA} $ is a unitary Fourier integral operator, so the $B_t$ and $\Bar{B}$ are both self-adjoint of order 0. See ~\cite{DuistermaatGuillemin} for more details.
    
    We can take derivatives of $B_t$:
    \begin{equation}\label{commu}
        \frac{d}{dt} B_t = -iAe^{-itA}B e^{itA} + ie^{-itA}Be^{-itA}A = \frac{1}{i}[A,B_t]
    \end{equation}
    Now we have 
    \begin{equation*}
        [A,\Bar{B}] = \frac{i}{2\pi}\int_0^{2\pi} \frac{d}{dt} dt = \frac{i}{2\pi}(B_{2\pi} - B_0)
    \end{equation*}
    Since $e^{2\pi i A} = cI$  commute with B, we know that $B_{2\pi} - B_0 = 0$, which gives us $[A,\Bar{B}] = 0$ and finish our proof.
\end{proof}

By definition of A we have $e^{2\pi i A} = cI$. Let $\lambda$ be one of the eigenvalues of A, then $\lambda$ will satisfy $e^{2\pi i \lambda} = c$. So the spectrum of A will be:
\[
    \Lambda_A = \left\{k+\lambda_0 : k \in \Z_{\geq 0} \right\}
\]
where $\lambda_0$ is the lowest eigenvalue, which depends on c. Denote the dimension of eigenspace $E_k = ker(A - (k+ \lambda_0)I)$ by $d_k$. 

Since A commute with $\Bar{B}$, $E_k$ is invariant under $\Bar{B}$, we can rearrange the basis of $E_k$ such that they are also the basis of $\Bar{B}$. Then we just simultaneously diagonalize A and $\Bar{B}$, so we can just set the spectrum of $A^2 + \Bar{B}$ as:
\begin{equation*}
     \Lambda_0 = \left\{\lambda_j^k = (k+\lambda_0)^2 + \mu_j^k: \quad k \geq 0 \quad \textrm{and} \quad 1 \leq j \leq d_k \right\}
\end{equation*}
We know that B is of order 0, so it is a bounded operator, we can rearrange the numbers $\mu_j^k$ such that $-||\Bar{B}|| \leq \mu_1^k \leq \dots \leq u_{d_{k}}^k \leq ||\Bar{B}||$. Then the numbers $\lambda_j^k$ are generally in a increasingly order like $\lambda_1^0$ , $\dots$, $\lambda_{d_0}^0$ , $\lambda_1^1$ , $\dots$, $\lambda_{d_1}^1$ , $\dots$ except for finitely many cases. However, since we just care about asymptotic behavior in the following of this paper, finitely many cases don't hurt us too much. Now we can define the spectrum of $A^2 + B$ as the following:
\begin{equation*}
     \Lambda_1 = \left\{\nu_{n(j,k)} = (k+\lambda_0)^2 + \nu_j^k: \quad k \geq 0 \quad \textrm{and} \quad 1 \leq j \leq d_k \right\}
\end{equation*}
where our $n(j,k) = d_1 + d_2 + \dots + d_{k-1} + j$. Now we claim that the difference between $\nu_{n(j,k)}$ and $\lambda_j^k$ won't be greater than $O(1/k)$. 
\begin{proposition}\label{difference}
    Let $\nu_{n(j,k)}$ and $\lambda_j^k$ be the eigenvalues of operator $A^2+B$ and $A^2+\Bar{B}$. Then:
    \begin{equation}
        |\nu_{n(j,k)} - \lambda_j^k | \leq \frac{M}{k}
    \end{equation}
    for all j and k.
\end{proposition}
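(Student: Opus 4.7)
The plan is to write $R := B - \bar B$ as a commutator $R = [A, Q]$ with $Q$ a bounded operator, and then apply Rayleigh--Schr\"odinger perturbation theory to the one-parameter family $H_s := H_0 + sR$ connecting $H_0 := A^2 + \bar B$ (at $s = 0$) to $H_1 := A^2 + B$ (at $s = 1$). The crux will be that $P_k R P_k = 0$, so the first-order correction vanishes and the remaining second-order term is controlled by the $\sim 2k$ spectral gap between consecutive clusters of $H_0$.

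The first step produces the commutator representation. Using the formula $\frac{dB_t}{dt} = \frac{1}{i}[A, B_t]$ of Lemma \ref{com} and integrating by parts against the weight $(t - \pi)$ over $[0, 2\pi]$ (with $B_{2\pi} = B_0 = B$) yields
\[
    R = B - \bar B = [A, Q], \qquad Q := \frac{1}{2\pi i}\int_0^{2\pi}(t - \pi)\, B_t \, dt,
\]
with $Q$ bounded (skew-adjoint of order $0$) since each $B_t$ is unitarily conjugate to $B$. Writing $P_k$ for the orthogonal projection onto $E_k = \ker(A - (k+\lambda_0)I)$ and using $A P_k = (k+\lambda_0) P_k$ immediately gives the structural identities
\[
    P_k R P_k = 0, \qquad P_{k'} R P_k = (k' - k)\, P_{k'} Q P_k,
\]
so $R$ has vanishing diagonal blocks in the cluster decomposition $\mathcal H = \bigoplus_k E_k$ and couples only distinct clusters.

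Next I would apply Kato analytic perturbation theory to $H_s$. For $k$ large the $k$-th cluster of $H_0$ lies at distance $\geq 2k + 1 - 2\|\bar B\|$ from the remaining spectrum, a gap that comfortably survives the bounded perturbation $sR$ for all $s \in [0, 1]$; this produces an analytic family of exactly $d_k$ eigenvalues $\nu_j(s)$ with $\nu_j(0) = \lambda_j^k$. Because $P_k R P_k = 0$ the first-order correction $\langle \phi, R\phi\rangle$ vanishes for every $\phi \in E_k$, and the second-order correction admits the estimate
\[
    \sum_{k' \ne k}\frac{\|P_{k'} R \phi\|^2}{\mathrm{dist}(\lambda_j^k, \sigma(H_0|_{E_{k'}}))}
    \;\leq\; \sum_{k' \ne k}\frac{\|P_{k'} R \phi\|^2}{(2k + 1)|k' - k| - C}
    \;\leq\; \frac{C'}{k}\,\|R \phi\|^2
    \;\leq\; \frac{C'\|R\|^2}{k}
\]
for any unit $\phi \in E_k$, using Bessel's identity $\sum_{k'}\|P_{k'} R \phi\|^2 = \|R \phi\|^2$ (with the $k' = k$ term vanishing). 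Higher-order terms in the Kato/Rayleigh--Schr\"odinger expansion each carry an additional factor $O(\|R\|/\text{gap}) = O(1/k)$ and form a convergent series of total size $O(1/k^2)$.

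Evaluating at $s = 1$ gives $\nu_j(1) - \lambda_j^k = O(1/k)$, and since this $O(1/k)$ perturbation is smaller than the minimum spacing between distinct $\lambda_j^k$ once $k$ is large (degenerate $\lambda_j^k$'s stay grouped together), the analytic labeling $j \mapsto \nu_j(1)$ coincides with the monotone Weyl labeling used to define $\nu_{n(j,k)}$. The finitely many remaining small $k$ are absorbed into the constant $M$ via the trivial bound $|\nu_{n(j,k)} - \lambda_j^k| \leq 2(\|B\| + \|\bar B\|)$. The main obstacle is controlling the tail of the Kato expansion uniformly in $k$, so that the $O(1/k)$ second-order estimate truly governs the full perturbation, and then verifying that the monotone re-labeling matches the analytic labeling so that the bound holds pointwise in $j$ rather than merely for the cluster as a whole.
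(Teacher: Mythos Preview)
Your route is genuinely different from the paper's. You use the commutator identity $B-\bar B=[A,Q]$ to obtain $P_k(B-\bar B)P_k=0$ and then invoke Kato/Rayleigh--Schr\"odinger theory for $H_s=A^2+\bar B+s(B-\bar B)$. The paper instead builds a unitary pseudodifferential operator $U=e^{Q'}$ (with $Q'$ skew-adjoint of order $-1$, constructed from $P=\tfrac{1}{2\pi i}\int_0^{2\pi}\!\int_0^tB_s\,ds\,dt$ via $Q'=\tfrac14(PA^{-1}+A^{-1}P)$) so that $R':=U(A^2+B)U^{-1}-(A^2+\bar B)$ is self-adjoint of order $-1$; then $A^{1/2}R'A^{1/2}$ is bounded, giving the sandwich $A^2+\bar B-mA^{-1}\le U(A^2+B)U^{-1}\le A^2+\bar B+mA^{-1}$, and since $A^{-1}$ commutes with $A^2+\bar B$, min-max compares the $n(j,k)$-th eigenvalues of the three operators directly. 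Both arguments exploit the same structural fact (that $B-\bar B$ is off-diagonal with respect to the cluster decomposition), but the paper's operator-inequality packaging dispenses with any perturbation series and with the labeling discussion, while your version is purely Hilbert-space-theoretic and avoids the PDO calculus needed to show $R'$ has order $-1$.

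There is, however, a real gap in your argument as written. The second-order formula you display is the non-degenerate Rayleigh--Schr\"odinger expression for a fixed unit vector $\phi\in E_k$, but $\dim E_k=d_k\to\infty$ and the $\lambda_j^k$ can be highly degenerate, so this does not by itself control the individual analytic eigenvalue branches. More seriously, your labeling step asserts that the $O(1/k)$ shift is eventually smaller than the minimum spacing between distinct $\lambda_j^k$; no such lower bound is available, since the $\mu_j^k$ are eigenvalues of $\bar B$ restricted to a space of growing dimension and may accumulate arbitrarily. The clean repair is to bypass the branch-by-branch analysis: use the Sz.-Nagy unitary (or the Feshbach/Schur complement) to produce a self-adjoint reduced operator $\tilde H_k$ on $E_k$ whose spectrum is exactly $\{\nu_{n(j,k)}\}_{j=1}^{d_k}$ and which, thanks to $P_kRP_k=0$, satisfies $\|\tilde H_k-P_kH_0P_k\|\le C\|R\|^2/g_k=O(1/k)$; then Weyl's eigenvalue inequality on the finite-dimensional space $E_k$ gives $|\nu_{n(j,k)}-\lambda_j^k|=O(1/k)$ for every $j$, with the monotone ordering built in and no spacing hypothesis required.
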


\begin{proof}
    We first make a claim which will be proved later in another lemma that there exists a unitary operator $U$ such that the difference $R = U ( A^2 +B) U^{-1} - (A^2 + \Bar{B})$ is a self adjoint operator of order -1. So $A^{1/2}RA^{1/2}$ is a bounded operator whose norm is denoted by m. Then we have
    \[
     - mI \leq A^{1/2}RA^{1/2} \leq mI
    \]
    So
    \[
     - mA^{-1} \leq R \leq mA^{-1}
    \]
    and 
    \[
    A^2 + \Bar{B} - mA^{-1} \leq A^2 + \Bar{B} +R \leq A^2 + \Bar{B} + mA^{-1}
    \]
    which is 
    \[
    A^2 + \Bar{B}- mA^{-1} \leq  U(A^2 +B )U^{-1}\leq A^2 + \Bar{B} + mA^{-1}
    \]
    The $n(j,k)$'th eigenvalue of $A^2 +\Bar{B} \pm mA^{-1}$ is except for finitely many values of j and k, $\lambda_j^k \pm m (k+\lambda_9)^{-1}= (k+\lambda_0)^2 + \mu_j^k \pm m (k+\lambda_9)^{-1}$. And the $n(j,k)$'th eigenvalue of $U(A^2 +B )U^{-1}$ is just $\nu_{n(j,k)}$, the same as $A^2 + B$. By the minimax principle and the above inequality, we have for almost all j and k
    \[
    \lambda_j^k - m (k+\lambda_9)^{-1} \leq \nu_{n(j,k)} \leq \lambda_j^k + m (k+\lambda_9)^{-1}
    \]
    i.e.
    \[
    |\nu_{n(j,k)} - \lambda_j^k | \leq \frac{m}{k+\lambda_0} \leq \frac{m}{k}
    \]
    Replacing m by a larger number M to allow for the exceptional values of j and k , we just proved the theorem.
    
\end{proof}

\begin{lemma}
     There is a unitary pseudo differential operator U such that $U - I$ and $U(A^2 +B ) - (A^2 + \Bar{B})$ are both of order -1.
\end{lemma}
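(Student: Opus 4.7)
The plan is to take $U = e^{iQ}$ for a self-adjoint classical pseudo-differential operator $Q$ of order $-1$, chosen by a Birkhoff-type averaging argument. For any such $Q$ the power series $U - I = iQ + \tfrac{(iQ)^2}{2!} + \cdots$ is of order $-1$ and $U$ is unitary of order $0$, which already gives the first of the two asserted properties. I read the second property as the conjugation statement $U(A^2+B)U^{-1} - (A^2+\bar B)$ of order $-1$ (this is equivalent to what is written, up to multiplication by $U^{-1}$, which differs from $I$ only to order $-1$; it is also the form actually used in the proof of Proposition~\ref{difference}).

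Expanding
\[
U(A^2+B)U^{-1} = (A^2+B) + [iQ,\, A^2+B] + \tfrac{1}{2}\bigl[iQ,\,[iQ,\, A^2+B]\bigr] + \cdots
\]
and tracking orders -- $[iQ,A^2]$ has order $0$, $[iQ,B]$ has order $-2$, and every doubly nested (or higher) commutator has order at most $-1$ -- the problem reduces to building $Q$ of order $-1$ with
\[
[iQ,\, A^2] \equiv \bar B - B \quad \text{modulo operators of order } -1.
\]
At the level of principal symbols this is a cohomological equation $\{q, a^2\} = \bar b - b$, where $a, q, b$ denote the principal symbols of $A, Q, B$. Using $\{q, a^2\} = 2a\{q, a\}$ together with the ellipticity $a > 0$, it becomes
\[
\{q, a\} = \frac{\bar b - b}{2a}.
\]
Here the Zoll hypothesis enters crucially: the Hamilton flow $\phi_t$ of $a$ is $2\pi$-periodic on $T^*Z\setminus 0$, and by construction $\bar b$ is the $\phi_t$-orbit average of $b$, so $\bar b - b$ has zero orbit mean; since $a$ is $\phi_t$-invariant, so does $(\bar b - b)/(2a)$. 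One then verifies by integration by parts that
\[
q(\xi) := -\frac{1}{2\pi}\int_0^{2\pi} t\,\frac{(\bar b - b)(\phi_t\xi)}{2a(\xi)}\,dt
\]
solves the cohomological equation. This $q$ is real, smooth, and homogeneous of degree $-1$ in the fibre variable, so symmetrizing any quantization of $q$ yields the required self-adjoint $Q$ of order $-1$, and $U := e^{iQ}$ is the desired unitary.

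The main obstacle is the careful bookkeeping of pseudo-differential orders through the BCH-style commutator expansion: one must check that every remainder really sits in an operator of order $-1$, using in particular that $B$ has order $0$ so that $[Q,B]$ gains two orders relative to $[Q,A^2]$, and that the nested commutator $[Q,[Q,A^2]]$ lies in order $-1$. Self-adjointness of $Q$, and hence unitarity of $U$, is guaranteed because $B - \bar B$ is self-adjoint and so $b - \bar b$ is real. The whole construction is modeled directly on the averaging arguments in Duistermaat--Guillemin~\cite{DuistermaatGuillemin} and Weinstein~\cite{Wein77}; if a normal form modulo operators of arbitrarily large negative order were needed one would iterate this same step, but a single application is sufficient for Proposition~\ref{difference}.
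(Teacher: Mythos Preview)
Your argument is correct and follows the same overall architecture as the paper's proof: take $U=e^{iQ}$ with $Q$ self-adjoint of order $-1$, expand $U(A^2+B)U^{-1}$ as a commutator series, track orders so that only $[iQ,A^2]$ matters to top order, and choose $Q$ so that this commutator equals $\bar B-B$ modulo order $-1$. Your observation that the lemma must be read as a conjugation statement (with $U^{-1}$) is also exactly what the paper uses in Proposition~\ref{difference}.

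The one genuine difference is in how $Q$ is produced. You solve the cohomological equation $\{q,a^2\}=\bar b-b$ at the principal-symbol level, writing down the explicit time-weighted orbit integral and then quantizing. The paper instead works entirely at the operator level: it sets $P=\tfrac{1}{2\pi i}\int_0^{2\pi}\!\int_0^t B_s\,ds\,dt$, proves the \emph{exact} operator identity $[A,P]=\bar B-B$ directly from $\tfrac{d}{ds}B_s=\tfrac{1}{i}[A,B_s]$ and periodicity, and then takes $Q=\tfrac14(PA^{-1}+A^{-1}P)$, using commutator algebra to pass from $[A,P]$ to $[A^2,Q]$ modulo order $-1$. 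The principal symbol of the paper's $Q$ agrees with your $iq$ up to an orbit-invariant term (which is in the kernel of $H_a$ and hence irrelevant), so the two constructions really are the same object viewed from opposite ends. What the operator-level route buys is that the key identity $[A,P]=\bar B-B$ is exact rather than merely principal, which makes the order bookkeeping slightly cleaner; what your symbol-level route buys is that it generalizes immediately and makes transparent why the Zoll hypothesis (periodicity of $\phi_t$) is exactly the solvability condition for the cohomological equation.
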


\begin{proof}
    Let $P = 1/(2\pi i)\int_0^{2\pi}\int_0^t B_s dsdt$. We first prove that $[A,P] = \Bar{B} - B$.
    \begin{align*}
        [A,P]& = \frac{1}{2\pi i }\int_0^{2\pi}\int_0^t[A, B_s] dsdt\\
             & = \frac{1}{2\pi} \int_0^{2\pi}\int_0^t\frac{dB_s}{ds} dsdt\\
             & = \frac{1}{2\pi} \int_0^{2\pi}(B_t - B) dt\\
             & = \Bar{B} - B
    \end{align*}
    where the second step use \eqref{commu}.

    The operator $P$ is a skew adjoint pseudo differential operator of order 0. And we define our $Q$ as
    \[
    Q = \frac{1}{4}(PA^{-1} + A^{-1}P)
    \]
    It is easy to see that $Q$ is a skew adjoint pseudo differential operator of order -1. Next, we need to prove modulo operators of order -1 we have 
    \[
    [A^2,Q] = [A,P] = \Bar{B} - B
    \]
    this relies on the commutator formula 
    \[
    [A^2,A^{-1}P] = 2[A,P] + A^{-1}[[A,P],A]
    \]
    and
    \[
    [A^2,PA^{-1}] = 2[A,P] + [[A,P],A]A^{-1}
    \]
    It is not hard to see that $A^{-1}[[A,P],A]$ and $[[A,P],A]A^{-1}$ are both of order -1.

    Finally we can prove that $U = e^Q$ is uitary and $U -I$ and $U(A^2 +B ) - (A^2 + \Bar{B})$ are both of order -1. Since Q is skew adjoint, U is unitary. By Seeley's functional calculus \cite{Seeley1967ComplexPO}, U is a pseudo differential operator of order 0. Since the 0 order symbol of Q is 0, the principal symbol of U is $e^0 = 1$, so $U -I $ have 0 order symbol equal to zero, then it must have order -1. And we notice that the morphism $T \rightarrow e^QTe^{-Q}$ can be written as a series of operators $\sum_{i=0}^{\infty} \frac{(Ad Q)^n}{n!}$, where $Ad Q$ is a operator such that $(Ad Q)(P) = [Q,P]$. Since $Q $ is of order -1, we know that $Ad Q$ will lowers the order of operators by 2. Then modulo operators of order -1, we have 
    \begin{align*}
        e^Q(A^2+B)e^{-Q} & \equiv A^2 +B + [Q,A^2+B]\\
                         & \equiv A^2 +B -(B - \Bar{B})\\
                         & = A^2 + \Bar{B}
    \end{align*}
    which finish the proof of this lemma.
\end{proof}

After proving Proposition \ref{difference}, we could show example \ref{Zoll Manifold}. First, we decompose the square root of our operator $A^2 + \Bar{B}$ into $A$ plus an order -1 operator Q.
\[
A^2 + \Bar{B} = (A + Q)^2
\]
Q is of order -1 because
\[
\Bar{B}= AQ + QA + Q^2
\]
$\Bar{B}$ is of order 0 and A is of order 1, Q could not have its principal symbol in order 0 because that would make the right hand side have order 1. Also Q will commute with A since we have already prove that A commute with $\Bar{B}$ which could deduce A commute with $A^2 + \Bar{B}$ and then commute with $A + Q$.

We write Q as
\[
Q = \sqrt{A^2 + \Bar{B}} -  A
\]
Multiply both side with $\sqrt{A^2 + \Bar{B}} + A$
\begin{equation}\label{bound}
Q ( \sqrt{A^2 + \Bar{B}}  + A )= \Bar{B}
\end{equation}
Like what we did previously, we could write the the spectrum of $A + Q$ as 
\[
\Lambda_2 = \left\{ \sqrt{\lambda_j^k} = k+\lambda_0 + q_j^k: \quad k \geq 0 \quad \textrm{and} \quad 1 \leq j \leq d_k \right\}
\]
By \eqref{bound}, we know that $q_j^k(k+\lambda_0+\sqrt{(k+\lambda_0)^2+\mu_j^k})$ is bounded since $\Bar{B}$ is a bounded operator. Then we know that
\[
\sqrt{\lambda_j^k} = k+\lambda_0 + O(\frac{1}{k})
\]
which could deduce
\[
\sqrt{\nu_{n(j,k)}} = k+\lambda_0 + O(\frac{1}{k}) 
\]
by proposition \ref{difference}.

From now on, we start to use the classic method to analyze the asymptotic behavior of the joint spectrum of $A^2 + \Bar{B}$, which in brief is just based on the study of Fourier transform of the spectral distribution. The Fourier side satisfy some partial differential equation which could be solved by comstructing some parametrix, and then we could use some tauberian type theorem to get the asymptotic estimates. This method start from Hormander's famous paper ~\cite{Hor68}, in which he did the analysis of the big singularity at zero of the spectral function $\sum e^{-it\mu_k}$ of a first order elliptic operator Q. This will lead to an asymptotic expansion of the following form 
\[
\sum \rho(\mu - \mu_j) \sim (2\pi)^{-n} \sum c_k\mu^{n-1-k}
\]
where $\rho$ is in an appropriate class of Schwartz functions and $c_k$ are the integrals over the cosphere bundle of polynomial expressions in the symbols of our operator Q. 

The above formula proved by Duistermaat and Guillemin in their paper ~\cite{DuistermaatGuillemin} is important for our proof of Lamma \ref{zec}, so we will briefly introduce it here and some details of its proof. Basically we need to inherit from Hormander's analysis of big singularity at zero, use a parametrix constructed by Lax and observe the relation between subprincipal symbol and the second term in the asymptotic expansion.

\begin{theorem} \label{DG1}
    Let Q be a self-adjoint first order pesudo differential operator with its eigenvalues $\{\mu_j\}$. There exists a sequence of real valued smooth densities $\omega_1, \omega_2,\dots$ in manifold X such that for every $\rho \in S(R)$ with $supp(\hat{\rho})$ contained in a sufficiently small neighborhood of 0 and $\hat{\rho} = 1$ in another neighborhood of 0 :
    \[
    \sum_j \rho(\mu - \mu_j) |e_j|^2 \sim (2\pi)^{-n} \sum_{k=0}^{\infty} \omega_k \mu^{n-k-1}
    \]
    for $\mu \rightarrow \infty$ and rapidly decreasing as $\mu \rightarrow -\infty$, asymptotics in the $C^{\infty}(X,\Omega_1)$ topology. Let $S^*_xX$ be the cosphere bundle defined by $S^*_xX = \{ \xi \in (T_x X)^* : q(x,\xi) = 1\}$. Here we have 
    \[
    \omega_0(x) = vol(S^*_xX) , \quad \omega_n(x) = 0 
    \]
    and 
    \[
    \omega_1(x) = (1-n) \int_{S^*_xX} sub(Q).
    \]
\end{theorem}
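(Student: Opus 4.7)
My plan is to follow the wave-kernel approach of H\"ormander \cite{Hor68} refined by Duistermaat--Guillemin. The starting move is to Fourier-transform the spectral side: writing $e_Q(t,x,y) = \sum_j e^{-it\mu_j} e_j(x)\overline{e_j(y)}$ for the Schwartz kernel of $e^{-itQ}$, Parseval gives
\[
\sum_j \rho(\mu - \mu_j)|e_j(x)|^2 \;=\; \frac{1}{2\pi}\int \hat{\rho}(t)\, e^{it\mu}\, e_Q(t,x,x)\, dt,
\]
so everything reduces to understanding the on-diagonal wave kernel as a distribution in $t$ near $t=0$, localized by $\hat\rho$. The support hypothesis on $\hat\rho$ is crucial: it lets me throw away all singularities of $e_Q(t,x,x)$ arising from nonzero closed bicharacteristics and keep only the one at $t=0$, paying at most a Schwartz-in-$\mu$ error.

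Next I would construct a parametrix for $U(t) = e^{-itQ}$ as a Fourier integral operator. Because $Q$ has positive principal symbol $q(x,\xi)$ homogeneous of degree one, the Lax/Hadamard construction produces, for $|t|$ in a small neighborhood of zero, a representation
\[
U(t,x,y) \;\sim\; \int e^{i(\varphi(t,x,\xi) - y\cdot\xi)}\, a(t,x,\xi)\, d\xi,
\]
with phase $\varphi$ solving the Hamilton--Jacobi equation $\partial_t\varphi + q(x,\partial_x\varphi)=0$, $\varphi(0,x,\xi)=x\cdot\xi$, and a classical amplitude $a \sim a_0 + a_{-1} + \cdots$ determined by successive transport equations. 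Restricting to $y=x$ produces an oscillatory integral whose combined phase in the $(t,\xi)$-integration is $t\mu + \varphi(t,x,\xi) - x\cdot\xi = t(\mu - q(x,\xi)) + O(t^2)$.

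I would then apply stationary phase in the radial variable $s = |\xi|$ and in $t$ after polar decomposition $\xi = s\eta$ with $q(x,\eta)=1$. The critical set is exactly $\{q(x,\xi)=\mu\}$, and the induced measure on the cosphere $S^*_x X$ is the Liouville measure. This yields an asymptotic expansion
\[
\sum_j \rho(\mu-\mu_j)|e_j(x)|^2 \;\sim\; (2\pi)^{-n}\sum_{k\ge 0}\omega_k(x)\,\mu^{n-1-k},
\]
where each $\omega_k(x)$ is an integral over $S^*_x X$ of a universal polynomial in the amplitudes $a_0, a_{-1}, \ldots, a_{-k}$ and the Taylor coefficients of $\varphi$ at $t=0$. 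Since $a_0|_{t=0} \equiv 1$, reading off the leading term gives $\omega_0(x) = \vol(S^*_x X)$. The rapid decrease as $\mu\to-\infty$ is automatic because the critical point lies in $\{q=\mu\}$, which is empty for negative $\mu$.

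The main obstacle is the explicit identification of $\omega_1(x)$ with $(1-n)\int_{S^*_x X} \mathrm{sub}(Q)$. This requires going one step deeper into the transport equation (producing $a_{-1}$) and tracking the second-order stationary-phase correction involving $\partial_t^2\varphi|_{t=0}$. Both contributions individually depend on the chosen amplitude/phase normalization, but their sum must be an invariant of $Q$. Recognizing that invariant as the subprincipal symbol is the delicate point; it rests on the fact that $\mathrm{sub}(Q)$ is precisely the part of the full symbol that transforms as a scalar under changes of half-density trivialization, which is exactly the invariance built into the wave kernel on the diagonal. The vanishing $\omega_n(x)=0$ should drop out as a parity/homogeneity constraint at the appropriate order in the stationary-phase expansion, since the oscillatory integral has a distinguished top-degree contribution with no invariant piece of the required form.
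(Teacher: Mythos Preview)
Your proposal follows essentially the same wave-kernel/stationary-phase strategy as the paper, but with one organizational difference and one point where your reasoning is off.

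The paper uses the H\"ormander phase $\Phi(t,x,y,\eta) = \psi(x,y,\eta) - t\, q(y,\eta)$, which is \emph{exactly linear in $t$} (with $\psi$ solving the eikonal equation and $\psi$ independent of $t$); you instead propose the Hamilton--Jacobi phase $\varphi(t,x,\xi) - y\cdot\xi$, which carries $O(t^2)$ terms. Both are valid parametrices, but the linear-in-$t$ choice is what makes the stationary phase in $(t,\omega)$ clean: after substituting $\eta = \mu\omega\tilde\eta$ with $q(y,\tilde\eta)=1$, one obtains the explicit formula
\[
\omega_k(y) = \int_{S^*_yX} a_{-k}(0,y,y,\tilde\eta)\,d\tilde\eta + \sum_{r=1}^k (n-k+r-1)\cdots(n-k)\,\frac{1}{r!}\int_{S^*_yX} (i^{-1}\partial_t)^r a_{-k+r}(0,y,y,\tilde\eta)\,d\tilde\eta,
\]
and the identification $\omega_1 = (1-n)\int_{S^*_yX} \mathrm{sub}(Q)$ reduces to the single transport-equation fact $i^{-1}\partial_t a_0(0,y,y,\eta) = -\,\mathrm{sub}(Q)(y,\eta)$. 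With your phase you would also have to track the second-order stationary-phase correction from $\partial_t^2\varphi|_{t=0}$, which is doable but messier; the paper's phase choice absorbs this into the amplitude from the start.

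Your account of $\omega_n = 0$ as a ``parity/homogeneity constraint'' is not the actual mechanism. In the explicit formula above, every term in the sum over $r\ge 1$ vanishes at $k=n$ because each coefficient contains the factor $(n-k)=0$, and the remaining term $\int a_{-n}(0,y,y,\tilde\eta)$ vanishes because the initial condition $U(0)=I$, after the change of variables $\eta=\eta(x,y,\zeta)$ straightening $\psi$ to $\langle x-y,\zeta\rangle$, forces $a_{-j}(0,y,y,\eta)=0$ for all $j\ge 1$ on the diagonal. There is no parity argument involved.
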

Here $sub(Q)$ is the subprincipal symbol of Q which is defined by the following formula locally.
\begin{equation*}
    sub (Q )= q_{n-1} - (2i)^{-1} \sum_{j-1}^n \frac{\partial^2 q_n}{\partial x_j\partial \xi_j}
\end{equation*}
where Q is a pseudo differential operator of order n and $q_{n-1}$ is the $n-1$ order symbol of Q. Its subprincipal symbol consists of two parts, the second term in its total symbol (which is $q_{n-1}$) and the sum of derivatives of the principal symbol. We know that the principal symbol can be invariantly defined on the whole manifold, but the total symbol can't. However, by adding these derivatives, we can make the subprincipal symbol invariantly defined globally. (See ~\cite{FIO2} ~\cite{FIO1} for more discussion about this.)

About subprincipal symbol, we have this formula:
\begin{equation} \label{sub}
    sub(P^{\alpha}) = \alpha p^{\alpha -1} sub(P)
\end{equation}
for all complex number $\alpha$, where $p$ here is the principal symbol of P. (See \cite{DuistermaatGuillemin} for the proof)

Consider the Laplace-Beltrami operator $\Delta$ on manifold, we claim that the subprincipal symbol of $\Delta$ is 0. This seems like a widely known result, but I did not find easy proof by searching on the Internet, so I decide to prove it here.
\begin{lemma}\label{sublap}
    On any manifold M, let $\Delta$ be the standard Laplace-Beltrami operator on it, then we have
    \begin{equation}
        sub(\Delta) = 0
    \end{equation}
\end{lemma}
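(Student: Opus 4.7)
My plan is to fix an arbitrary point $p\in M$, compute the subprincipal symbol formula in geodesic normal coordinates centered at $p$, and appeal to the coordinate-invariance of $\text{sub}$ asserted in the paragraph preceding the lemma to extend the conclusion globally.

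In any local chart, write
\[
\Delta = \frac{1}{\sqrt{|g|}}\,\partial_i\bigl(\sqrt{|g|}\,g^{ij}\partial_j\bigr) = g^{ij}\partial_i\partial_j + b^j\partial_j, \qquad b^j = \frac{1}{\sqrt{|g|}}\partial_i\bigl(\sqrt{|g|}\,g^{ij}\bigr).
\]
Converting to $D_k=-i\partial_k$ gives the Kohn--Nirenberg full symbol, with principal part $q_2(x,\xi)=-g^{ij}(x)\xi_i\xi_j$ and subleading part $q_1(x,\xi)=i\,b^j(x)\xi_j$. Using symmetry of $g^{ij}$ one checks $\sum_j \partial_{x_j}\partial_{\xi_j}q_2 = -2(\partial_j g^{ij})\xi_i$, and after splitting $b^j = \partial_i g^{ij} + g^{ij}\partial_i\log\sqrt{|g|}$ the defining formula simplifies to
\[
\text{sub}(\Delta)(x,\xi) \;=\; i\,g^{ij}(x)\,\xi_j\,\partial_i\log\sqrt{|g|}(x).
\]

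Now specialize to geodesic normal coordinates centered at $p$: in these one has $g_{ij}(p)=\delta_{ij}$ and $\partial_k g_{ij}(p)=0$ for all $i,j,k$, hence $\partial_k|g|(p)=0$ and in particular $\partial_i\log\sqrt{|g|}(p)=0$. The displayed formula therefore vanishes at $p$ for every $\xi\in T_p^*M$. Since $p$ was arbitrary, invariance of the subprincipal symbol gives $\text{sub}(\Delta)\equiv 0$ on $T^*M\setminus 0$.

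The one point that genuinely needs care is the invariance step itself. The raw Kohn--Nirenberg formula is not coordinate-invariant when $\Delta$ is viewed as an operator on functions; indeed, the computed expression $i g^{ij}\xi_j\,\partial_i\log\sqrt{|g|}$ is in general nonzero away from normal coordinates. Invariance only holds once $\Delta$ is regarded as a half-density operator via conjugation by the Riemannian half-density $|g|^{1/4}$. Geodesic normal coordinates are precisely the chart in which this issue is invisible at $p$: from $|g|^{1/4}(p)=1$ and $\partial_k|g|^{1/4}(p)=0$ it follows that $|g|^{1/4}\Delta|g|^{-1/4}$ agrees with $\Delta$ at $p$ to the order the subprincipal symbol sees, so the local vanishing we obtained really does give the invariant value. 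This is the main place where I expect the argument to need careful handling, but the normal-coordinate choice sidesteps it cleanly.
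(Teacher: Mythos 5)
Your proof is correct and follows the same strategy as the paper: reduce to geodesic normal coordinates at an arbitrary point $p$ and invoke coordinate-invariance of the subprincipal symbol. You are, however, noticeably more careful than the paper on two points. First, the paper asserts that the total symbol of $\Delta$ in local coordinates has no first-order part, which is false outside the center of a normal chart; your computation correctly keeps the drift term $b^j\partial_j$, and shows how $q_1$ combines with the $\sum_j\partial_{x_j}\partial_{\xi_j}q_2$ correction to produce the simplified expression $i\,g^{ij}\xi_j\,\partial_i\log\sqrt{|g|}$ — which is the quantity that actually vanishes at the center of normal coordinates. Second, your closing remark about the Kohn--Nirenberg formula being invariant only for operators on half-densities, and the role of the $|g|^{1/4}$ trivialization, is precisely the subtlety that makes the ``evaluate at $p$ in normal coordinates, then extend by invariance'' argument legitimate; the paper's proof does not mention it. Both refinements are genuine improvements and should be kept: without them the paper's displayed formula for $\mathrm{sub}(\Delta)$ is incorrect in a general chart, even though the final conclusion is right.
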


\begin{proof}
    First we know that in local coordinates, we can represent the symbol of Laplace Beltrami operator as the following:
    \[
    \sigma(\Delta) = \sum_{j,k=1}^n g^{jk}(x) \xi_j\xi_k
    \]
    it only contains the principal symbol, which is the second order symbol. The first order symbol is 0. So the subprincipal symbol only contains the derivatives part, which is.
    \[
    sub(\Delta) = -(2i)^{-1} \sum_{i=1}^n \frac{\partial^2}{\partial x_i\partial \xi_i} (\sum_{j,k=1}^n g^{jk}(x) \xi_j\xi_k)
    \]
    We can directly prove that this formula is equal to zero, or alternatively, we could apply standard differential geometry techniques to select a normal coordinate system. In such a system, all first-order derivatives of the metric tensor vanish at a specific point. Since we can choose normal coordinates at any point on our manifold, this allows us to demonstrate that the subprincipal symbol is zero at that point. Given that the subprincipal symbol is invariantly defined, it possesses properties similar to those of tensors, which implies that it must be zero globally across the entire manifold. This approach is a widely used technique in modern geometry.
\end{proof}

Since $\Delta$ and $\Delta +c$ have the same principal symbol, and the first order symbol of them are both zero. So they will just have the same subprincipal symbol, which just gives us $sub(\Delta +c) = 0$. By the \eqref{sub}, just choose $\alpha = 1/2$, we can obtain the following corollary, which will help us compute the spectral asymptotics of A:

\begin{corr}
    Recall that we define our operator $A_0$ just as this: $A_0 = \sqrt{\Delta + c}$. And we define our operator as this: $A = A_0 - T$. Here T is a pseudo differential operator of order -1 which is already defined. The subprincipal symbol of A and $A_0$ are both zero.
    \[
    sub(A_0) = 0 \quad and \quad sub(A) = 0
    \]
\end{corr}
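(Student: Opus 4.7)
The plan is to deduce both equalities from two ingredients already assembled in this section: Lemma \ref{sublap}, which gives $sub(\Delta)=0$, and the power formula \eqref{sub}, which reads $sub(P^{\alpha})=\alpha p^{\alpha-1}sub(P)$. First I would establish the claim for $A_0$ directly from these two facts, and then I would bootstrap to $A=A_0-T$ using only the fact that $T$ has order $-1$.

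For $A_0$: the operator $-\Delta+cI$ differs from $\Delta$ only by a sign and by a zero-order multiplication by the constant $c$. Neither operation alters the first-order component of the total symbol, which remains identically zero, and the principal (second-order) symbol changes only by a sign. Since the subprincipal symbol is determined entirely by the top-order and next-to-top-order pieces of the total symbol, we obtain $sub(-\Delta+cI)=0$. Applying \eqref{sub} with $P=-\Delta+cI$ (a second-order positive elliptic operator) and $\alpha=1/2$ then yields
\[
sub(A_0)=sub\bigl((-\Delta+cI)^{1/2}\bigr)=\tfrac{1}{2}\,p^{-1/2}\,sub(-\Delta+cI)=0,
\]
where $p$ denotes the principal symbol of $-\Delta+cI$.

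For $A$: since $T$ is a pseudo differential operator of order $-1$, its full symbol expansion is concentrated in orders $\le -1$. Writing $A=A_0-T$ as a first-order operator, the order-$1$ and order-$0$ pieces of the total symbol of $A$ coincide term-by-term with those of $A_0$; likewise, the derivative correction entering the formula for the subprincipal symbol is taken of the principal symbol, which also equals that of $A_0$. Hence $sub(A)=sub(A_0)=0$. The only real obstacle is the symbolic book-keeping in the first step: one has to check that the ``order-preserving perturbations'' $-\Delta\leadsto -\Delta+cI$ and $A_0\leadsto A_0-T$ genuinely do not disturb the relevant symbol components. Once this is in place the corollary follows in two short applications of \eqref{sub} and Lemma \ref{sublap}.
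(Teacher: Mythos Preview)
Your proposal is correct and follows essentially the same route as the paper: first observe that passing from $\Delta$ to $-\Delta+cI$ leaves both the top-order and next-to-top-order symbol components unchanged (up to a harmless sign), so $sub(-\Delta+cI)=0$ by Lemma~\ref{sublap}; then apply the power formula \eqref{sub} with $\alpha=1/2$ to get $sub(A_0)=0$; finally, note that $T$ has order $-1$ and therefore cannot contribute to either the order-$1$ or order-$0$ part of the symbol of the first-order operator $A$, whence $sub(A)=sub(A_0)=0$. The only minor difference is that you are a bit more explicit about the sign when passing from $\Delta$ to $-\Delta$, which the paper glosses over.
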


\begin{proof}
    Just by definition, the subprincipal symbol is computed by using the principal symbol and the second term in the total symbol. So we have 
    \[
    sub(\Delta + c) = sub(\Delta) = 0
    \]
    by lemma \ref{sublap}. Then we obtain
    \[
    sub(A_0) = sub (\sqrt{\Delta + c}) = 0
    \]
    by \eqref{sub}. Finally we get: 
    \begin{equation}
        sub(A) = sub(A_0) = 0
    \end{equation}
    since A and $A_0$ are both pseudo differential operator of order 1, and T is a pseudo differential operator of order -1. So T won't contribute to the computation of $sub(A)$.
\end{proof}

\begin{proof}

Now we start to present the sketch proof of Theorem \ref{DG1}. First, let $U(t,x,y)$ be the kernel of $e^{-itQ}$. This operator could be realized as a solution operator of the following equaiotn
\[
i^{-1} \frac{\partial}{\partial t} u +Q u = 0,  \qquad u(0,x) = u_0(x)
\]
So we could use parametrix to approximate the kerneal $U(t,x,y)$.
Actually this kernel can be written as a locally finite sum of integrals of the following form:
\begin{equation} \label{integral}
    I(t,x,y) = (2\pi)^{-n} \int e^{i\Phi(t,x,y,\zeta)} a(t,x,y,\zeta) d\zeta
\end{equation}
See ~\cite{soggefica} or other classical materials for more details.
Here $\Phi(t,x,y,\zeta)$ is a non-degenerate phase function introduced by Hormander in ~\cite{FIO1}. And :
\begin{equation}
    d_{\zeta} \Phi = 0 \Rightarrow ((t,d_t\Phi),(x,d_x\Phi),(y,-d_y\Phi)) \in C
\end{equation}
C is the canonical relation associated with the Fourier Intergral Operator $U=e^{-itQ}$. And $a(t,x,y,\zeta)$ is a symbol of order 0.

For small t, $U(t,x,y)$ can locally be represented by integral of the form \eqref{integral} with a special type of phase function invented by Hormander in~\cite{Hor68}:
\begin{equation}
    \Phi(t,x,y,\eta) = \psi(x,y,\eta) - t \cdot q(y,\eta)
\end{equation}
Where q is the symbol of the operator $Q$ and $\psi$ is as a function of the first variable, the local solution near diagonal of the Cauchy problem:
\begin{align*}
    q(x,d_x\psi(x,y,\eta)) = & q (y,\eta),\\
    \psi(x,y,\eta) = 0 \quad when \quad & \langle x-y,\eta \rangle = 0 \\
    d_x\psi(x,y,\eta) = \eta \quad for \quad & x=y
\end{align*}
This equation is the so called eikonal equation in geometric optics. The phase function here is different from the simple phase function appears in Hadamard parametrix construction. Usually the parametrix constructed using this $\Phi$ may be called as Lax parametrix. What makes $\Phi$ special is that it is linear in t with $q(y,\eta)$ not related to x and $\psi(x,y,\eta)$ not related to t. This property somehow makes the composition formula or the stationary phase easier to compute. 

Remember that $U(t,x,y)$ can be written as
\[
U(t,x,y) = \sum_{j=1}^{\infty} e^{-it\mu_j} \cdot  e_j(x) \cdot  \overline{e_j(y)}
\]
And the trace of U, which is just integrate over the diagonal of the kernel, could be written as
\[
Tr( U(t)) = \int_{x\in X} \sum_{j = 1}^{\infty} e^{-it\mu_j}|e_j(x)|^2 = \sum_{j=1}^{\infty} e^{-it\mu_j}
\]
The trace of U is just the fourier transform of the following spectral distribution
\[
\sigma(\mu) = \sum_{j=1}^{\infty} \delta(\mu - \mu_j)
\]

We can use $\hat{\rho}$ to cut off our kernel which could help us rule out the other singularities different from zero. Then we can do inverse fourier transform to get back to $\rho(\mu-\mu_j)|e_j|^2$. Substituting $\eta = \mu \cdot \omega \cdot \tilde{\eta}$, where $\tilde{\eta}$ satisfy $q(y,\tilde{\eta}) =1$, and $\omega > 0$ . We use the method of stationary phase to get,
\[
(2\pi)^{-1} \int e^{i\mu t} \hat{\rho}(t) U(t,y,y) dt = (2\pi)^{-n} \mu^{n-1}\int_{S^*_x X} b(y,\tilde{\eta},\mu) d\tilde{\eta}
\]
with
\[
b(y,\tilde{\eta},\mu) \sim \frac{1}{r!}(i^{-1}\frac{\partial^2}{\partial t\partial \omega})^r[\hat{{\rho}}(t) a (t,y,y,\mu\omega\tilde{\eta}) \cdot \omega^{n-1}]_{t=0 \atop \omega =1}
\]
as $\mu \rightarrow \infty$, uniformly in $(y,\tilde{\eta}) \in S^*X$.

Here the integral over the unbounded t and $\omega$ domain is fine because we can do this integral first in a neighborhood of $\omega =1$ and $t = 0$, which is a bounded domain. Then we can prove the other part is rapidly decreasing since we have no stationary points for our phase function in that domain.

This gives us the formula for $\omega_k$:
\begin{align*}
    \omega_k(y) &= \int_{S^*_x X} a_{-k}(0,y,y,\tilde{\eta}) d\eta + \sum_{r=1}^k(n-k+r-1) \cdots (n-k)\cdot \\
    &\int_{S^*_x X} \frac{1}{r!}(i^{-1}\frac{\partial}{\partial t})^r a_{-k+r}(0,y,y,\tilde{\eta}) d\tilde{\eta}
\end{align*}

In order to compute the $\omega_k$, observe that we have the initial condition:
\begin{equation}\label{ic}
    (2\pi)^{-n} \int e^{i\psi(x,y,\eta)} a (0,x,y,\eta) d\eta = U(0,x,y) = \delta(x - y)
\end{equation}
Recall that $\psi(x,y,\eta)$ is defined as the solution to an ODE (eikonal equation). It is easy to check that $\psi(x,y,\eta)$ and $\langle x-y,\eta \rangle$ will define the same Lagrange manifold. Also $d^2_{\eta}\psi = d^2_{\eta} \langle x-y,\eta \rangle= 0$ at $x = y$. We could choose a locally homogeneous change of coordinates $\eta = \eta(x,y,\zeta)$ such that $\eta(y,y,\zeta) = \zeta$ and :
\begin{equation}
    \psi(x,y,\eta(x,y,\zeta)) = \langle x-y,\zeta \rangle
\end{equation}
according to Hormander ~\cite{FIO2} and ~\cite{FIO1}. Subtituting this in the left hand side of the above formula \eqref{ic} we see that $U(0) = I$ if we choose 
\begin{equation}
    a(0,x,y,\eta(x,y,\zeta)) = 1/|det (d_{\zeta} \eta(x,y,\zeta))|
\end{equation}

In particular, we obtain $a_0(0,y,y,\eta) =1 $ and $a_{j}= 0$  for all the other $j<0$ by letting $x = y$ and $\eta(y,y,\zeta)= \zeta$ in the above equation.

Applying $i^{-1} \partial/\partial t + Q$ to \eqref{integral} under the integral sign amounts to replacing the symbol a by a new symbol $a'$ such that
\begin{align*}
a'(t,x,y,\eta) &\sim \frac{1}{i}\frac{\partial a}{\partial t}(t,e,y,\eta) -q(y,\eta)\cdot a(t,x,y,\eta)\\
&+ \sum_{k} \frac{1}{k!} (i^{-1}\sum_{j}\frac{\partial^2}{\partial\tilde{x_j}\partial\tilde{\xi_j}}[Q(x,\chi(\tilde{x},x,y,\eta)+\tilde{\xi})a(t,x+\tilde{x},y,\eta)]_{\tilde{x}=0 \atop \tilde{\xi} =0}
\end{align*}
for $\eta \rightarrow \infty$, here we have written
\begin{equation}
    \psi(x+\tilde{x},y,\eta) = \psi (x,y,\eta) + \langle \tilde{x}, \chi(\tilde{x},x,y,\eta) \rangle
\end{equation}

The equation
\[
a'_{-j}(t,x,y,\eta) = 0
\]
is a first order linear partial differential equations for $a_{-j}$ involving only the $a_{-k}$ for $k < j$. With the initial condition, these "transport equations" could be solved uniquely in small time. This is just the parametrix construction process. The computation here is from Duistermatt and Guillemin \cite{DuistermaatGuillemin}. You could also see Hormander \cite{Hor68}. With all of this, we just get a local description of the solution U of $(i^{-1}\partial /\partial t +Q)U = 0$, $U(0) = I$, modulo integral operators with smooth kernels.

Actually all the $\omega_k$ could be computed using the above transport equations. Although, for the higher order terms, it is too complicated to compute, thanks to our subprincipal symbol, we can get $i^{-1} \partial a_0/\partial t (0,y,y,\eta) = -sub(Q)(y,\eta)$. With this and the above formula for $\omega_k$, we can get $\omega_1 =(1-n)\int_{S^*_x X}sub(Q)$.
\end{proof}

Our main goal in this chapter is to analyze the spectrum of $\Delta = A^2 + B$, but by the following observations, we know that there are not too much difference between $A^2 +B$ and $A^2 + \Bar{B}$.

The spectrum of $A^2 + \Bar{B}$ is $\Lambda = \{(k+\lambda_0)^2 + \mu_j^k\}$, which will cluster around $(k+\lambda_0)^2$. And by \eqref{difference} we can easily see that the eigenvalues of $A^2 + B$ will also cluster around $(k+\lambda_0)^2$, we call this k'th cluster. We define the sum of characteristic measures in k'th cluster of these two operators by:
\[
\phi_k (\lambda) = \sum_{j=1}^{d_k} \delta( \lambda - \nu_j^k)
\]
and
\[
\psi_k (\lambda) = \sum_{j=1}^{d_k} \delta( \lambda - \mu_j^k)
\]

It is easy to see that the following is a corollary of proposition \ref{difference} :
\begin{align*}
    \frac{1}{d_k} \left\langle \phi_k - \psi_k , \rho \right\rangle &= \frac{1}{d_k} \sum_{j=1}^{d_k}[\rho(\nu_j^k) - \rho(\mu_j^k)] \\
    &\leq \frac{1}{d_k} ||\rho|| \sum_{j=1}^{d_k}(\nu_j^k- \mu_j^k)\\
    &\leq \frac{1}{d_k} ||\rho|| d_k \frac{M}{k} = ||\rho|| \frac{M}{k}
\end{align*}
Here we could choose a uniform constant for $\rho$ in a bounded set of $C_0^{\infty}(\R)$ with $C^1$ topology. The norm of $\rho$ is $C^1$ norm. The first step is by mean value theorem and the second step is by proposition \ref{difference}.

Although our main goal is to study the asymptotic behavior of the $d_k$, which is the number of eigenvalues in the k'th cluster, we still need some previous knowledge. Let us first consider the inner product of characteristic measure in k'th cluster with a smooth function $\rho$.
\begin{equation}\label{fc}
\left \langle \psi_k , \rho\right \rangle = \sum_{j=1}^{d_k} \rho (\mu_j^k)
\end{equation}
These numbers can be realized as Fourier coefficients of a distribution on the circle. It is well known that we have a coarse estimate about eigenvalues $d_k = O(k^{d-1})$, (See ~\cite{soggefica} for example), so $\left \langle \psi_k , \rho\right \rangle$ will just have polynomial growth. Then we can define a $2\pi$-periodic distribution $\theta_{\rho}$ as a sum of the following Fourier series:
\begin{align}
    \theta_{\rho} (t) &=\sum_{k=0}^{\infty} \left \langle \psi_k , \rho\right \rangle  e^{-ikt}\\
    &=\sum_{k=0}^{\infty} \sum_{j=1}^{d_k} \rho (\mu_j^k) e^{-ikt}
\end{align}
Actually we have done a centralization process here to omit $(k+\lambda_0)^2$ for each term $\psi_k$. This will make the support of these distributions $\psi_k$ in a compact set centered at 0, which could help us to use the globally test function $\rho \in C^{\infty}(\R)$ and count the number of eigenvalues easier than the usual settings. This is the main difference comparing with the standard analysis of the distribution $\sigma(\mu) = \sum_{j=1}^{\infty} \delta(\mu -\mu_j)$ where $\mu_j$ are all the eigenvalues. In that case, the distribution has unbounded support which allows us only to use compact supported test functions. The reason allow us to do this is that the bicharacteristic flow of laplacian on Zoll manifold is periodic with the same period, which can help us find a periodic propagator $e^{-itA}$. Because of this, we could somehow compactify the timeline into a circle. That is why we use fourier coefficients here instead of fourier transform in Theorem \ref{DG1}.

Let us define our operators $S(t)$ according to $\theta_{\rho} (t)$:
\begin{equation}
    S(t) = \rho(\Bar{B}) e^{-it(A-\lambda_0I)}
\end{equation}
Here we just use the standard spectral theory definition, then $S(t)(e_j^k) = \rho(\mu_j^k) e^{-ikt} e_j^k$ since $e_j^k$ are the common basis of both A and $\Bar{B}$. Now we can formally write $\theta_{\rho}(t)$ as traces of our newly defined operator:
\begin{equation}
    \theta_{\rho}(t) = Tr (S(t)) = Tr (\rho(\Bar{B}) e^{-it(A-\lambda_0I)})
\end{equation}
The trace here is just a integral in the diagonal of the kernel. To justify this rigorously, you could just apply both side to a test function and integrate by parts for enough times. See ~\cite{DuistermaatGuillemin} for details.

Our operator consists of two parts, the amplitude part $\rho(\Bar{B})$ and the oscillatory part $e^{-iAt}$. The oscillatory part is an elliptic operator whose associated canonical transformation is the map $f_t: T^{*}X/0 \xrightarrow{f_t} T^{*}X/0$. Since Z is our Zoll manifold , and we have assumed the bicharacteristic flow $f_t$ is simply periodic with period $2\pi$, we can make the following assumptions from now:
\begin{equation}
    f_t(x) \ne f_{\tau} (x) , \quad for \quad t \ne \tau
\end{equation}
where $t,\tau \in (0,2\pi)$ and x is any point on manifold Z.

Now we consider the amplitude part $\rho (\Bar{B})$. Recall that the operator $\Bar{B}$ is defined by $\Bar{B} = \frac{1}{2\pi} \int_0^{2\pi} B_t dt$. We can prove that the principal symbol of $\Bar{B}$ given by the following formula:
\begin{equation}\label{principal}
    \sigma_{\Bar{B}} = \frac{1}{2\pi} \int_0^{2\pi} \sigma_B \circ f_t dt
\end{equation}
This is easy to prove. Since $B_t = e^{itA}B e^{-itA}$, by Egorov's theorem, $B_t$ is a pseudo differential operator of order 0 whose principal symbol is just $\sigma_{B_t} = \sigma_B \circ f_t $. Then we can prove formula \eqref{principal} just by integration over t. With this simple results, we can get the principal symbol of $\rho(\Bar{B})$ just by the following proposition:

\begin{proposition}
    For all function $\rho \in C^{\infty}_0(R)$, $\rho(\Bar{B})$ is a pseudo differential operator of order 0 whose principal symbol is given by
    \[
    \sigma_{\rho(\Bar{B})} = \rho \circ \sigma_{\Bar{B}}
    \]
\end{proposition}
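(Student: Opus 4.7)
The plan is to prove the proposition via Helffer-Sj\"ostrand functional calculus. Since $\Bar{B}$ is a self-adjoint pseudo differential operator of order $0$, it is a bounded operator on $L^2(Z)$ whose spectrum is a compact subset of $\mathbb{R}$. For any $\rho \in C_0^\infty(\R)$, I would choose an almost analytic extension $\tilde\rho \in C_0^\infty(\mathbb{C})$ satisfying $\tilde\rho|_{\R} = \rho$ and $|\bar\partial \tilde\rho(z)| \leq C_N |\mathrm{Im}\,z|^N$ for every $N \geq 0$. This yields the representation
\[
\rho(\Bar{B}) = \frac{1}{\pi} \int_{\mathbb{C}} \bar\partial \tilde\rho(z) \, (z - \Bar{B})^{-1} \, dm(z),
\]
where $dm$ is Lebesgue measure on $\mathbb{C}$, reducing the analysis of $\rho(\Bar{B})$ to the study of the resolvent family $(z - \Bar{B})^{-1}$ for non-real $z$.

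Next I would show that for $z \in \mathbb{C} \setminus \mathbb{R}$ the resolvent $(z - \Bar{B})^{-1}$ is itself a pseudo differential operator of order $0$ with principal symbol $(z - \sigma_{\Bar{B}})^{-1}$. Because $\Bar{B}$ is self-adjoint, $\sigma_{\Bar{B}}$ is real-valued, so the symbol $z - \sigma_{\Bar{B}}$ of $z - \Bar{B}$ is nowhere vanishing and elliptic with lower bound $|\mathrm{Im}\,z|$. A standard parametrix construction in the symbol class $S^0$ then produces an operator $E(z)$ with principal symbol $(z - \sigma_{\Bar{B}})^{-1}$ satisfying $(z - \Bar{B}) E(z) = I + R(z)$ with $R(z)$ smoothing; since $z - \Bar{B}$ is genuinely invertible on $L^2$, the exact resolvent differs from $E(z)$ by a lower order pseudo differential operator, confirming that $(z-\Bar{B})^{-1}$ is pseudo differential of order $0$ with the claimed principal symbol.

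Feeding this back into the Helffer-Sj\"ostrand formula gives the principal symbol of $\rho(\Bar{B})$ as
\[
\sigma_{\rho(\Bar{B})}(x,\xi) = \frac{1}{\pi} \int_{\mathbb{C}} \bar\partial\tilde\rho(z) \, (z - \sigma_{\Bar{B}}(x,\xi))^{-1} \, dm(z) = \rho\bigl(\sigma_{\Bar{B}}(x,\xi)\bigr),
\]
the second equality being the scalar Helffer-Sj\"ostrand identity applied with the real number $\sigma_{\Bar{B}}(x,\xi)$ in place of the operator $\Bar{B}$.

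The main obstacle is justifying the interchange of the integration over $z$ with the pseudo differential symbol expansion, and in particular ensuring that the resulting operator lies in the class $S^0$ rather than merely defining a bounded $L^2$ operator. Each term in the asymptotic expansion of the parametrix symbol grows like a fixed negative power of $|\mathrm{Im}\,z|$ as $z$ approaches the real axis; this blow-up is compensated precisely by the vanishing of $\bar\partial\tilde\rho$ to infinite order there, so choosing $N$ large enough in the almost-analytic extension makes every symbol semi-norm of the integrand absolutely integrable. That integrability is exactly what upgrades the pointwise identity above to an identity in the symbol topology and completes the proof.
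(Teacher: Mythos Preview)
Your Helffer--Sj\"ostrand argument is correct and is one of the standard routes to this functional-calculus result. However, there is nothing to compare it against: the paper does not prove this proposition at all. Immediately after stating it the author writes ``We don't want to prove it here'' and simply refers the reader to Weinstein~\cite{Wein77} and other classical microlocal analysis sources. So your proposal supplies a proof where the paper gives only a citation; the approach you chose is entirely appropriate for a self-adjoint order-$0$ operator and the discussion of the $|\mathrm{Im}\,z|$-blowup versus the infinite-order vanishing of $\bar\partial\tilde\rho$ is exactly the point that makes the argument go through.
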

We don't want to prove it here. For those who have interests in this, see ~\cite{Wein77} or the other classical material about microlocal analysis.

We now go back to the analysis of $Tr(S(t))$. $S(t)$ have two parts, first is $\rho(\Bar{B})$, which is a pseudo differential operator of order 0. And the other part is $e^{it\lambda_0}e^{-itA}$, which is a one-parameter group of operators. We can consider it as a single Fourier integral operator (FIO) of order -1/4 from $C^{\infty}(X)$ to $C^{\infty}(X \times R)$. Then according to calculus of FIO (see ~\cite{FIO2} for details), $S(t)$, as a composition of $e^{it\lambda_0}e^{-itA}$ and $\rho(\Bar{B})$, is also a FIO of order -1/4 from $C^{\infty}(X)$ to $C^{\infty}(X \times R)$ associated with the same canonical relation as $e^{-itA}$, the standard wave propagator.

Let $S(t,x,y)$ be the Schwartz kernel of our operator $S(t)$, since we have already assumed the bicharacteristic flow $f_t$ does not have fixed points. So the trace $\theta_{\rho} (t) = Tr (S(t)) = \int_X S(t,x,x) dx$ is a smooth function on $(0,2\pi)$. Then we are left with the only big singularities at 0. By analyzing this singularity we can determine the asymptotic behavior of the Fourier coefficients \eqref{fc}, which is the key part of the proof. Actually the analysis of this big singularity start from Hormander's classic ~\cite{Hor68} in which he did the analysis of big singularity at 0. And in Duistermaat and Guillemin's paper ~\cite{DuistermaatGuillemin}, they proved theorem \ref{DG1} with the idea of Hormander and some further computations like the subprincipal symbol.  

We use the same method in proving Theorem \ref{DG1} to deal with the big singularity at $t=0$ of $S(t,x,x)$. We also write $S(t,x,y)$ as oscillatory integral, with the same phase fucntion $\Phi(t,x,y,\eta)$ introduced by Hormander. However, we need to incorporate the factors $e^{it\lambda_0}$ and $\rho(\Bar{B})$ to our integral since $S(t)$ is a composition of $\rho(\Bar{B})$, $e^{it\lambda_0}$ and $e^{-itA}$. On the principal symbol level, the amplitude is simply multiplied by $e^{it\lambda_0}\sigma_{\rho(\Bar{B})}$. 

With all of the preparation above, we can state the propositon proved by Weinstein in ~\cite{Wein77}.

\begin{proposition} \label{we77}
    The Fourier coefficients of $\theta_{\rho}(t)$ have an asymptotic expansion :
    \begin{equation}
        \langle \psi_k, \rho \rangle \sim (2\pi)^{-n} \sum_{\nu = 0}^{\infty} \langle c_{\nu}, \rho \rangle k^{n-\nu -1}
    \end{equation}
    where each coefficient $\langle c_{\nu}, \rho \rangle$ is linear in $\rho$ and is the integral over the unit cosphere bundle $\sigma_A^{-1}(1)$( in Theorem \ref{DG1}, we use $S^*_x X$ to represent this bundle, and operator A is changed to be Q ) of a polynomial in the complete symbols of A and $\rho(\Bar{B})$. In particular, $\langle c_0, \rho \rangle = \int_{\sigma_A^{-1}(1)} \sigma_{\rho(\Bar{B})}$.
\end{proposition}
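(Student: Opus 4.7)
The plan is to mimic the proof of Theorem \ref{DG1} in the periodic setting of the circle $\mathbb{R}/2\pi\mathbb{Z}$, using Fourier coefficients rather than the Fourier transform. The essential inputs are that $S(t) = \rho(\Bar{B}) e^{-it(A - \lambda_0 I)}$ is a Fourier integral operator of order $-1/4$ associated with the canonical graph of the geodesic flow, and that by hypothesis $f_t$ has no fixed points for $t \in (0, 2\pi)$. This ensures that $\theta_{\rho}(t) = Tr(S(t))$ is $C^\infty$ on $S^1 \setminus \{0\}$, so after multiplying by a cutoff $\chi \in C_c^\infty((-\pi, \pi))$ with $\chi \equiv 1$ near $0$, the difference $\theta_{\rho} - \chi\theta_{\rho}$ is smooth and contributes only $O(k^{-\infty})$ to the Fourier coefficients. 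All information about $\langle \psi_k, \rho\rangle$ is therefore concentrated in the singularity at $t=0$.

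The main step is to represent $S(t, x, y)$ near $t = 0$ by the Lax--Hormander oscillatory integral
\[
S(t, x, y) \equiv (2\pi)^{-n} \int e^{i[\psi(x,y,\eta) - t(\sigma_A(y,\eta) - \lambda_0)]} b(t, x, y, \eta)\, d\eta \pmod{C^\infty},
\]
where $\psi$ is the eikonal solution of Theorem \ref{DG1} adapted to the operator $A$, and $b \sim \sum_{j \geq 0} b_{-j}$ is a classical symbol of order $0$. By Egorov's theorem combined with the proposition immediately preceding the statement, the leading amplitude at $t=0$, $x=y$ is $b_0(0, y, y, \eta) = \sigma_{\rho(\Bar{B})}(y, \eta) = \rho(\sigma_{\Bar{B}}(y, \eta))$, while the lower-order $b_{-j}$ are determined by the same transport equations as in Theorem \ref{DG1}, modified only by the presence of $\rho(\Bar{B})$ in the initial data and by the additional constant phase $t\lambda_0$.

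Next, compute the Fourier coefficient as
\[
\langle \psi_k, \rho \rangle = \frac{1}{2\pi} \int_X \int_{-\pi}^{\pi} \chi(t)\, e^{ikt}\, S(t, x, x)\, dt\, dx + O(k^{-\infty}),
\]
substitute the parametrix (noting $\psi(x,x,\eta) = 0$), rescale $\eta = k\tilde\eta$ with $k > 0$ large, and apply stationary phase in $(t, |\tilde\eta|)$ around the set $\{t = 0,\ \sigma_A(x,\tilde\eta) = 1\}$, using the homogeneity of $\sigma_A$ to rewrite the phase as $tk(1 - \sigma_A(x,\tilde\eta)) + t\lambda_0$. The leading term comes out to
\[
(2\pi)^{-n} k^{n-1} \int_{\sigma_A^{-1}(1)} \sigma_{\rho(\Bar{B})},
\]
identifying $\langle c_0, \rho \rangle$; the higher order coefficients $\langle c_\nu, \rho \rangle$ combine the subleading $b_{-j}$ with the higher order terms of the stationary phase expansion, and are integrals over $\sigma_A^{-1}(1)$ of polynomials in the total symbols of $A$ and $\rho(\Bar{B})$, exactly as in Theorem \ref{DG1}.

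The main obstacle will be the careful bookkeeping in the oscillatory integral: one needs to cut off the fiber variable $\eta$ to the cone where $\sigma_A(y,\eta)$ is bounded below by a positive constant, and show via repeated integration by parts in $t$ that the excluded region contributes $O(k^{-\infty})$ uniformly in $x \in X$. Linearity of each $\langle c_\nu, \rho \rangle$ in $\rho$ is automatic because $\rho \mapsto \rho(\Bar{B})$ is linear and enters every amplitude $b_{-j}$ only through its complete symbol. Finally, $\theta_\rho$ is a well-defined distribution on $S^1$ because the coarse bound $d_k = O(k^{n-1})$ (cf.\ \cite{soggefica}) forces at most polynomial growth of the Fourier coefficients $\langle \psi_k, \rho \rangle$, which justifies the Fourier-coefficient interpretation used throughout the argument.
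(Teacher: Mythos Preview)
Your proposal is correct and follows essentially the same approach as the paper: the paper does not give a self-contained proof of this proposition but rather cites Weinstein~\cite{Wein77} and sketches the method, namely representing $S(t,x,y)$ near $t=0$ by the Lax--H\"ormander parametrix with the amplitude modified by the factor $e^{it\lambda_0}\sigma_{\rho(\Bar B)}$, then applying stationary phase exactly as in Theorem~\ref{DG1}. Your outline fills in precisely those details; the only cosmetic difference is that the paper keeps $e^{it\lambda_0}$ in the amplitude rather than absorbing $\lambda_0$ into the phase, but this does not affect the expansion.
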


In terms of the original operator B instead of $\Bar{B}$, the distribution $c_0$ is defined by 
\[
\langle c_0,\rho \rangle = \int_{\sigma_A^{-1}(1)} \rho \circ [\frac{1}{2\pi}\int_0^{2\pi}\sigma_B \circ f_t dt]
\]
In other words, $c_0$ is the push-forward to R, under the averaged symbol $\sigma_{\Bar{B}} = \frac{1}{2\pi}\int_0^{2\pi}\sigma_B \circ f_t dt$, of the natural measure on cosphere bundle $\sigma_A^{-1}(1)$. The support of $c_0$ is contained in the range of $\sigma_{\Bar{B}}$, so $\langle c_0,\rho \rangle $ is defined for all $\rho \in C^{\infty}(R)$. In particular, $\langle c_0,1 \rangle $ is the volume of $\sigma_A^{-1}(1)$.

The higher order terms $\langle c_{\nu}, \rho \rangle$ is a little bit complicated to compute because of the factor $\rho(\Bar{B})$. However, to prove Lemma \ref{zec}, we only need to count the number of eigenvalues in the k'th cluster, for which we don't have to get the full information of Fourier coeffcients $\langle \psi_k, \rho \rangle$ for all $\rho$.  We only need to consider $\rho =1$ which will give us :
\begin{equation}
    d_k = \langle \psi_k ,1  \rangle = (2\pi)^{-n} \sum_{\nu = 0}^{1} \langle c_{\nu}, 1\rangle k^{n-\nu -1} + O(k^{n-3}).
\end{equation}
We have already computed the first term:
\[
\langle c_0,1  \rangle = \int_{\sigma_A^{-1}(1)} 1 = Vol(\sigma_A^{-1}(1))
\]
The second term is a little bit tricky here, since we don't want to repeat the hard and tedious computation which have been done by Hormander, Duistermaat and Guillemin. We want to somehow use Theorem \ref{DG1} instead. So let us recall the settings of Theorem \ref{DG1} and Proposition \ref{we77}. Theorem \ref{DG1} starts from analyzing the traces of $U(t,x,y)$ where $U = e^{-itQ}$:
\begin{equation}
    Tr (U(t)) = \int_{x\in X} \sum_{j=1}^{\infty} e^{-it\mu_j} |e_j|^2 = \sum_{j=1}^{\infty} e^{-it\mu_j}
\end{equation}
while Proposition \ref{we77} first considers the trace of this operator $\rho(\Bar{B})e^{-it(A-\lambda_0I)}$:
\begin{equation}
    Tr(\rho(\Bar{B})e^{-it(A-\lambda_0I)}) = \theta_{\rho}(t) = \sum_{k=0}^{\infty} \langle \psi_k,\rho \rangle e^{-ikt}
\end{equation}

Since now we are computing the number of eigenvalues, we can replace our $ A^2 + \Bar{B}$ by just $A^2$, i.e. let $\Bar{B} = 0$, which won't change the number $d_k$. Then we may prove the following formula of $\langle c_1,1 \rangle$ as what we did in Theorem \ref{DG1}:
\begin{equation}
    \langle c_1,1 \rangle = (1-n) \int _{\sigma_A^{-1}(1)} sub(A-\lambda_0I)
\end{equation}

We have already proved that the subprincipal symbol of $A = A_0 -T = \sqrt{\Delta + cI} - T$ is 0. However, by a translation of $\lambda_0I$, the subprincipal symbol of $A - \lambda_0I$ is not zero. That is why we need to make a change of variables $t = k + \lambda_0$ as in the sphere spectrum case \eqref{polynomial}. Finally we get for zoll manifold , $\lambda_0 = \alpha /4$, where $\alpha$ is the so called Maslov index. For sphere case , this Maslov index is equal to $2(n-1)$ which is the same as \eqref{polynomial}, $t= k + (n-1)/2$. 

In general, we have the following for a Zoll manifold of dimension n:
\begin{equation}
    d_k = (2\pi)^{-n} vol(\sigma_A^{-1}(1)) t^{n -1} + O(t^{n-3})
\end{equation}
where $A$ is a operator defined by $A = A_0 -T = \sqrt{\Delta + cI} - T$ and t is equal to $k + \alpha/4$. Now we have finished the proof of Lemma \ref{zec}.

\newpage

\newsection{Proof of Theorem \ref{zollweyl}}

Let $M_1$ and $M_2$ both be compact Riemannian manifolds without boundary with $d_i = \dim M_i$ for $i = 1,2$. Their product $M = M_1 \times M_2$ is again a boundaryless, compact Riemannian manifold endowed with the product metric. The Laplace-Beltrami operator on $M$ is
\[
    \Delta_M = \Delta_{M_1} \otimes I + I \otimes \Delta_{M_2}.
\]
If $e_1$ and $e_2$ are eigenfunctions on $M_1$ and $M_2$, respectively, with
\[
    \Delta_{M_i} e_i = -\lambda_i^2 e_i \qquad \text{ for $i = 1,2$},
\]
then their tensor $e_1 \otimes e_2$ is an eigenfunction of the Laplacian $\Delta_M$ with
\[
    \Delta_M e_1 \otimes e_2 = (\Delta_{M_1} e_1) \otimes e_2 + e_1 \otimes (\Delta_{M_2} e_2) = -(\lambda_1^2 + \lambda_2^2)e_1 \otimes e_2.
\]
If $e_1$ and $e_2$ are drawn from a Hilbert basis of eigenfunctions on $M_1$ and $M_2$, respectively, then the tensors $e_1 \otimes e_2$ form a Hilbert basis for $L^2(M)$. We can construct the spectrum $\Lambda$ on $M$ from the spectra $\Lambda_1$ and $\Lambda_2$ for $M_1$ and $M_2$ by
\[
    \Lambda = \left\{\sqrt{\lambda_1^2 + \lambda_2^2} : \lambda_1 \in \Lambda_1 \text{ and } \lambda_2 \in \Lambda_2 \right\}
\]
with multiplicities
\[
    \mu(\lambda) = \sum_{\substack{(\lambda_1,\lambda_2) \in \Lambda_1 \times \Lambda_2 \\ \lambda_1^2 + \lambda_2^2 = \lambda^2}} \mu_1(\lambda_1) \mu_2(\lambda_2),
\]
where here $\mu_1$ and $\mu_2$ are the respective multiplicities for $\Lambda_1$ and $\Lambda_2$. The Weyl counting function for $M$ can be written
\[
    N(\lambda) = \sum_{\substack{(\lambda_1,\lambda_2) \in \Lambda_1 \times \Lambda_2 \\ \lambda_1^2 + \lambda_2^2 \leq \lambda^2}} \mu_1(\lambda_1) \mu_2(\lambda_2).
\]
A similar formula holds for the Weyl counting function if $M$ is an $n$-fold product $M_1 \times \cdots \times M_n$ of compact, boundaryless Riemannian manifolds with respective spectra and multiplicities $\Lambda_i$ and $\mu_i$ for each $i$. Namely,
\begin{equation}\label{product counting formula}
    N(\lambda) = \sum_{\substack{(\lambda_1,\ldots,\lambda_n) \in \Lambda_1 \times \cdots \times \Lambda_n \\ \lambda_1^2 + \cdots + \lambda_n^2 \leq \lambda^2}} \prod_{i = 1}^n \mu_i(\lambda_i).
\end{equation}

Now to specify to the case of Zoll manifolds. Let $M = Z^{d_1} \times Z^{d_2} \times \cdots \times Z^{d_n}$. If $M$ contains any $Z^1$ factors, we gather them on the rightmost side of the product. That is,
\[
    M = Z^{d_1} \times \cdots \times Z^{d_k} \times \underbrace{Z^1 \times \cdots \times Z^1}_{\text{$n-k$ times}} \simeq Z^{d_1} \times \cdots \times Z^{d_k} \times \T^{n-k}
\]
where $\T^{n-k} = \R^{n-k}/2\pi \Z^{n-k}$ is the $(n-k)$-dimensional torus. We also write the dimension multiindex
\[
    d = (d_1,\ldots,d_k,\underbrace{1,\ldots,1}_{\text{$n-k$ times}}) \qquad \text{ with } d_1,\ldots, d_k \geq 2.
\]

Here we notice that $Z^1$ is just the same as $S^1$, this is not hard to see because there does not exist one dimensional closed manifold other than $S^1$ and there are no intrinsic different metrics on $S^1$ other than standard metric. 

If all the Zoll manifold are just round spheres, we can write the Weyl counting function for M as $N^s(\lambda)$:
\begin{equation} \label{sphere product counting function}
    N^s(\lambda) = \sum_{\substack{m \in \Z_{\geq 0}^k \times \Z^{n-k} \\ |m + y|^2 \leq \lambda^2 + |y|^2}} \prod_{i = 1}^k \left( \binom{m_i + d_i}{d_i} - \binom{m_i + d_i - 2}{d_i} \right)
\end{equation}
by \eqref{product counting formula}, \eqref{sphere spectrum} and \eqref{torus counting function}. Where
\[
    y = \left( \frac{\alpha_1}{4} , \ldots , \frac{\alpha_k}{4} , 0, \ldots, 0 \right).
\]

\begin{exmp}\label{flat torus}[The Flat Torus]
The flat torus admits a Hilbert basis of eigenfunctions in the form of exponentials
\[
    e_m(x) = (2\pi)^{-d/2} e^{-i\langle x , m \rangle} \qquad \text{ for $m \in \Z^d$}
\]
each satisfying
\[
    \Delta e_m = -|m|^2 e_m.
\]
Hence we have spectrum and multiplicities,
\[
    \Lambda = \{ -|m|^2 : m \in \Z^d \} \qquad \text{ and } \qquad \mu(\lambda) = \#\{ m : |m| = \lambda\}.
\]
The Weyl counting function then just counts the number of integer lattice points lying in the ball of radius $\lambda$,
\begin{equation}\label{torus counting function}
    N(\lambda) = \#\{m \in \Z^d : |m| \leq \lambda\}.
\end{equation}
\end{exmp}

It was proved in 1950 by Hlawka \cite{Hlawka1950}, using the Poisson summation formula that 
\[
    N(\lambda) = |B_d| \lambda^{d} + O(\lambda^{d-1 - \frac{d-1}{d+1}}).
\]
The remainder bound is not sharp and the exponent has been improved little by little over the decades. In dimensions $d \geq 5$, there is a sharp remainder of $O(\lambda^{d-2})$ (see e.g. \cite{Fricker82}). In dimension $d = 2$, the conjectured $O(\lambda^{\frac12 + \epsilon})$ remainder for all $\epsilon > 0$ remains open, with the best current exponent due to Bourgain and Watt \cite{BourgainWatt}. In three dimensions, the conjectured $O(\lambda^{1+\epsilon})$ bound is also open, with the best known exponent due to Heath-Brown \cite{HeathBrown}. See also \cite{huxley1996area, EkkehardKrätzel1992} for a thorough description of the distribution of lattice points in the ball, and, consequently, the Weyl law results on the torus.

If we consider Zoll manifolds instead of round spheres, our eigenvalues just lie in the small rectangles around the lattice points. We don't know the exact distribution of those eigenvalues but we will prove these small perturbation would have influence to our Weyl counting function only in the $O^{|d| -2}$ level.

First let m be our lattice point
\[
 m = \left( m_1,m_2,\cdots, m_k, m_{k+1}, \cdots, m_n \right)
\]
and $\delta_m$ to be the size of cluster
\[
\delta_m = \left(\frac{c_1}{m_1}, \cdots, \frac{c_k}{m_k},0,\cdots, 0\right)
\]
Here $c_i$ is just some constants depend on Zoll manifold $Z_i$. We define $Q_m$ to be the small cube where the eigenvalues of our product Zoll manifolds lie in. Then $Q_m$ is the following
\begin{equation}
Q_m = \left\{ x :m + y - \delta_m \le x \le  m+ y + \delta_m \right\}
\end{equation}
by \eqref{Zollcluster}. It is not hard to see that the length to the origin for points inside $Q_m$ will vary from $|m+y-\delta_m|$ to $|m+y+\delta_m|$. If we set that $|m+y| \sim \lambda$, then we have these length in between $\lambda - c/\lambda$ and $\lambda + c/\lambda$ for some constant c. We could check this by some easy computation:
\begin{align*}
    |m+y| - |m + y - \delta_m| = \sqrt{(m_1+\frac{\alpha_1}{4})^2+\cdots+(m_k+\frac{\alpha_k}{4})^2+m_{k+1}^2+\cdots+m_n^2} -\\
    \sqrt{(m_1+\frac{\alpha_1}{4}-\frac{c_1}{m_1})^2+\cdots+(m_k+\frac{\alpha_k}{4}-\frac{c_k}{m_k})^2+m_{k+1}^2+\cdots+m_n^2}\\
    \approx \frac{2\sum_{j=1}^k( c_i+ \frac{\alpha_ic_i}{4m_i}) - \sum_{j=1}^k \frac{c_i^2}{m_i^2}}{2 \sqrt{(m_1+\frac{\alpha_1}{4})^2+\cdots+(m_k+\frac{\alpha_k}{4})^2+m_{k+1}^2+\cdots+m_n^2}}
\end{align*}
this expression will give you a fraction with a $O(\lambda)$-size denominator and a $O(1)$-size numerator as $\lambda$ grows.

Now we could write the Weyl counting function for the product of Zoll manifolds as 
\begin{equation}
N(\lambda) = \sum_{\substack{m \in \Z_{\geq 0}^k \times \Z^{n-k} \\ |m + y+\delta_m|^2 \leq \lambda^2 }} \prod_{i = 1}^k P_{Z_i} (m_i +\frac{\alpha_i}{4}) + \sum_{\substack{m \in \Z_{\geq 0}^k \times \Z^{n-k} \\ |m + y -\delta_m|^2 < \lambda^2 <|m + y +\delta_m|^2 }}  \sharp \{\mu \in Q_m \cap B_\lambda \}
\end{equation}
where the first part is just counting eigenvalues in the small rectangles which are totally inside the ball $B_{\lambda}$, and the second part is to count those eigenvalues in the rectangles which intersect the boundaries of $B_{\lambda}$.

We denote the first and second part by $\rom{1},\rom{2}$. By the above observation of the radius of points inside $Q_m$, we have the following estimates
\begin{align*}
    \rom{2} & \le \sum_{\substack{m \in \Z_{\geq 0}^k \times \Z^{n-k} \\ |m + y -\delta_m|^2 < \lambda^2 <|m + y +\delta_m|^2 }}  \sharp \{\mu \in Q_m  \} \\
    & = \sum_{\substack{m \in \Z_{\geq 0}^k \times \Z^{n-k} \\ |m + y -\delta_m|^2 < \lambda^2 <|m + y +\delta_m|^2 }} \prod_{i = 1}^k P_{Z_i} (m_i +\frac{\alpha_i}{4}) \\
    & \le \sum_{\substack{m \in \Z_{\geq 0}^k \times \Z^{n-k} \\ \lambda - c/\lambda < |m+y| < \lambda + c/\lambda }} \prod_{i = 1}^k P_{Z_i} (m_i +\frac{\alpha_i}{4})
\end{align*}
and 
\begin{equation*}
    \sum_{\substack{m \in \Z_{\geq 0}^k \times \Z^{n-k} \\ |m+y| \le \lambda - c/\lambda }} \prod_{i = 1}^k P_{Z_i} (m_i +\frac{\alpha_i}{4}) \le  \rom{1} \le \sum_{\substack{m \in \Z_{\geq 0}^k \times \Z^{n-k} \\ |m+y| \le \lambda }} \prod_{i = 1}^k P_{Z_i} (m_i +\frac{\alpha_i}{4})
\end{equation*}

Now we define $\hat{N}(\lambda)$ to be 
\begin{equation}\label{ZLC}
    \hat{N}(\lambda) : = \sum_{\substack{m \in \Z_{\geq 0}^k \times \Z^{n-k} \\ |m+y| \le \lambda }} \prod_{i = 1}^k P_{Z_i} (m_i +\frac{\alpha_i}{4})
\end{equation}
If we can prove the following Lemma, along with the above estimates for $\rom{1},\rom{2}$, we can finish the proof Theorem \ref{zollweyl}.

\begin{lemma}\label{ZollLatticeCount}
    Let $\hat{N}(\lambda)$ defined by \eqref{ZLC}. Then we have 
    \begin{equation}
        \hat{N}(\lambda) = \frac{|B_{|d|}|}{(2\pi)^{|d|}} \vol(M) \lambda^{|d|} + O(\lambda^{|d|-1-\frac{n-1}{n+1}}).
    \end{equation}
    and 
    \begin{equation}
        \hat{N}(\lambda) -\hat{N}(\lambda- c/\lambda) = O (\lambda^{|d|-2}).
    \end{equation}
\end{lemma}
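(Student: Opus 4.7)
The plan is to reduce both statements to weighted lattice-point counts in an $n$-dimensional ball. Applying Lemma~\ref{zec}, I would decompose $P_{Z_i}(t) = C_i t^{d_i - 1} + R_i(t)$ with $R_i(t) = O(t^{d_i - 3})$, and expand the product $\prod_{i=1}^k P_{Z_i}(m_i + \alpha_i/4)$. Every summand other than the principal one $C \prod_i (m_i + \alpha_i/4)^{d_i - 1}$ (with $C = \prod C_i$) carries total degree at most $|d| - n - 2$ in the $k$ weighted coordinates, so each contributes at most $\lambda^{|d| - n - 2} \cdot \lambda^n = O(\lambda^{|d| - 2})$ after summation over $\{|m+y| \le \lambda\}$; this is acceptable in both error bounds. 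It therefore suffices to analyze the monomial-weighted sum
\[
\hat N_0(\lambda) := C \sum_{\substack{m \in \Z^k_{\ge 0} \times \Z^{n-k} \\ |m + y| \le \lambda}} \prod_{i = 1}^k \left(m_i + \tfrac{\alpha_i}{4}\right)^{d_i - 1}.
\]

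For the first statement I would follow the Iosevich--Wyman strategy of rewriting the polynomial weight as a lattice count in a simplex: each $(m_i + \alpha_i/4)^{d_i - 1}$ equals $(d_i - 1)!$ times the number of integer points in a $(d_i - 1)$-dimensional simplex of side $m_i + \alpha_i/4$, modulo a polynomial correction of lower degree (again absorbed as above). This realizes $\hat N_0$ as an unweighted lattice count in a convex region $\Omega_\lambda \subset \R^{|d|}$ that is a simplex bundle over the shifted positive $n$-ball $\{|p| \le \lambda,\,p_i \ge 0\}$. The discrepancy in the $n$-dimensional base is controlled by Hlawka's bound $O(\lambda^{n - 1 - \frac{n - 1}{n + 1}})$, and each boundary point contributes at most $O(\lambda^{|d| - n})$ through the fiber simplices, producing a total error $O(\lambda^{|d| - 1 - \frac{n - 1}{n + 1}})$. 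The leading volume equals $C \int_{|p| \le \lambda,\, p_i \ge 0 \text{ for } i \le k} \prod p_i^{d_i - 1}\, dp$, and by the Dirichlet angular integral $\int_{S^{n-1}_+} \prod \omega_i^{d_i - 1}\,d\omega = \prod \Gamma(d_i/2) / (2^{n-1} \Gamma(|d|/2))$ together with the explicit value $C_i = (2\pi)^{-d_i}|S^{d_i - 1}|\vol(Z^{d_i})$ (from the proof of Lemma~\ref{zec} and the submersion identity $\vol(\sigma_{A_i}^{-1}(1)) = |S^{d_i - 1}|\vol(Z^{d_i})$), this integral collapses to $\frac{|B_{|d|}|}{(2\pi)^{|d|}} \vol(M)\,\lambda^{|d|}$.

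For the second statement, the weight on the shell $\lambda - c/\lambda < |m + y| \le \lambda$ is at most $O(\lambda^{|d| - n})$, so it suffices to show the unweighted count of lattice points in the shell is $O(\lambda^{n - 2})$, matching the shell volume $|S^{n-1}|\lambda^{n-1}\cdot c/\lambda$. I would smooth the shell indicator at scale $1/\lambda$ and apply Poisson summation on the shifted lattice $\Z^k_{\ge 0} \times \Z^{n - k} + y$; the $\xi = 0$ term gives the shell volume $O(\lambda^{n-2})$, and for $\xi \ne 0$ the Fourier transform of the smoothed spherical shell is the difference of two Bessel-type integrals at radii $\lambda$ and $\lambda - c/\lambda$, whose Taylor expansion in the radius extracts a factor of $|\xi|/\lambda$ of cancellation beyond the ball-Fourier asymptotic $\lambda^{(n-1)/2}|\xi|^{-(n+1)/2}$. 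Combined with the rapid decay for $|\xi|\gtrsim \lambda$ coming from the $1/\lambda$-scale smoothness, the non-zero frequencies stay at the $O(\lambda^{n-2})$ scale, yielding the required $O(\lambda^{|d|-2})$ after multiplication by the weight bound.

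The chief technical obstacle is the second statement: a naive differencing of the first statement at $\lambda$ and $\lambda - c/\lambda$ yields only the weaker shell bound $O(\lambda^{|d| - 1 - \frac{n - 1}{n + 1}})$, since the two Hlawka errors do not automatically cancel. The improvement to $O(\lambda^{|d| - 2})$ requires the explicit cancellation in the Bessel transform of the difference of two nearby shells described above, and is particularly delicate in low dimensions, where the Gauss circle remainder is irregular. A secondary subtlety is the constant identification in the first statement, which depends on the submersion formula linking the operator-theoretic $C_i$ from Lemma~\ref{zec} to the geometric volume $\vol(Z^{d_i})$.
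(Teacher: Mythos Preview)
Your reduction step---expanding each $P_{Z_i}$ via Lemma~\ref{zec} into a leading monomial plus an $O(t^{d_i-3})$ remainder and absorbing all cross terms as $O(\lambda^{|d|-2})$---is exactly what the paper does (its passage $\hat N\to\hat N_1\to\hat N_2\to\hat N_3$). The divergence, and the gap, is in how you handle the resulting monomial-weighted sum $\hat N_0$.

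Your argument for the first equation does not go through as written. You lift the weight $\prod_i(m_i+\alpha_i/4)^{d_i-1}$ to an unweighted lattice count in a convex region $\Omega_\lambda\subset\R^{|d|}$, then assert that the discrepancy factors as (Hlawka error in the $n$-dimensional base)$\times$(maximal fiber weight). But the weighted discrepancy
\[
\sum_{m}F(m)\chi_{\lambda B}(m)\;-\;\int F(x)\chi_{\lambda B}(x)\,dx
\]
simply does not decompose that way: even when the unweighted base count matches the base volume exactly, the weighted error can be large if $F$ varies across the ball, and conversely there is no mechanism in your argument that converts the $n$-dimensional Hlawka remainder into control of the weighted sum. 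Attempting to run Abel summation or the layer-cake formula leads to lattice counts over spherical caps or super-level sets $\{F\ge t\}\cap\lambda B$, where the curvature is not uniform and Hlawka does not apply with a uniform constant. Nor can you invoke Hlawka directly in $\R^{|d|}$: although $\Omega_\lambda$ is convex, its boundary has flat faces coming from the simplex constraints, so the positive-curvature hypothesis fails.

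The paper (and Iosevich--Wyman) instead run the Hlawka machine \emph{on the weighted function itself}: mollify $\chi_{\lambda B}F$ at scale $\epsilon$, apply Poisson summation on the shifted lattice, and estimate $\widehat{\chi_B F}=\widehat{\chi_B}*\widehat{\tilde F}$ by combining the spherical decay $|\widehat{\chi_B}(\xi)|\lesssim\langle\xi\rangle^{-(n+1)/2}$ with product-type decay $|\widehat{\tilde F}(\xi)|\lesssim\prod_i\langle\xi_i\rangle^{-2}$ coming from the separated, piecewise-polynomial structure of $F$ (this is Proposition~\ref{Weightedlattice} and Lemma~\ref{fourier estimate}). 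The weight enters the Fourier side, not as a multiplicative factor on an unweighted error. Your shell argument for the second equation is closer in spirit to what is actually needed, and in fact for the main theorem the weaker bound $O(\lambda^{|d|-1-\frac{n-1}{n+1}})$ on the shell---obtained by differencing the first equation at $\lambda$ and $\lambda-c/\lambda$---already suffices.
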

\begin{proof}
    By lemma \ref{zec}, we know that 
    \begin{equation}
        P_{Z_i}(m_i+\frac{\alpha_i}{4}) = C_i (m_i+\frac{\alpha_i}{4})^{d_i-1} + O ((m_i+\frac{\alpha_i}{4})^{d_i-3}). \quad for \quad m_i \ge M
    \end{equation}
    To use this polynomial expansion, we need to define 
    \begin{equation}
        \hat{N}_1(\lambda) : = \sum_{\substack{m \in \Z_{\geq M}^k \times \Z^{n-k} \\ |m+y| \le \lambda }} \prod_{i = 1}^k P_{Z_i} (m_i +\frac{\alpha_i}{4})
    \end{equation}
    First we need to show that the difference between $\hat{N}_1(\lambda)$ and $\hat{N}(\lambda)$ won't hurt us much and is of order $O(\lambda^{d-1})$.
    \begin{align*}
        \hat{N}(\lambda)-\hat{N}_1(\lambda) &= \sum_{\substack{m \in \Z_{\geq 0}^k \times \Z^{n-k} \\ |m+y| \le \lambda }} \prod_{i = 1}^k P_{Z_i} (m_i +\frac{\alpha_i}{4}) - \sum_{\substack{m \in \Z_{\geq M}^k \times \Z^{n-k} \\ |m+y| \le \lambda }} \prod_{i = 1}^k P_{Z_i} (m_i +\frac{\alpha_i}{4}) \\
        & = \sum_{\substack{m \in \bigcup_{i=1}^k(\{m_i \le M\}\bigcap\Z_{\geq 0}^k \times \Z^{n-k}) \\ |m+y| \le \lambda }} \prod_{i = 1}^k P_{Z_i} (m_i +\frac{\alpha_i}{4})
    \end{align*}
    Since the first k manifolds are all Zoll manifolds with dimension greater or equal than 2, if we only count the first M lattice points in one of those directions, then we will lose at least two dimension in our Weyl counting function, which is the following
    \[
    \hat{N}(\lambda)-\hat{N}_1(\lambda) = O(\lambda^{|d|-2})
    \]
    Now we rewrite the $\hat{N}_1$ as
    \[
    \hat{N}_1(\lambda) = \sum_{m \in \Z_{\geq M}^k \times \Z^{n-k} + y} \chi_D(|m|/\lambda) \prod_{i=1}^k P_{Z_i}(m_i)
    \]
    And we define the main part of $\hat{N}_1$ as $\hat{N}_2$
    \[
    \hat{N}_2(\lambda) = \sum_{m \in \Z_{\geq M}^k \times \Z^{n-k} + y} \chi_D(|m|/\lambda) \prod_{i=1}^k C_im_i^{d_i-1}
    \]
    Then we can use Lemma \ref{zec} to estimate the difference of $\hat{N}_1$ and $\hat{N}_2$, which is also of order $O(\lambda^{|d|-2})$
    \begin{align*}
    \hat{N}_1(\lambda)-\hat{N}_2(\lambda) &= \sum_{m \in \Z_{\geq M}^k \times \Z^{n-k} + y} \chi_D(|m|/\lambda) \prod_{i=1}^k \left( C_i m_i^{d_i-1} + O(m_i^{d_i-3})\right)- \hat{N}_1(\lambda)\\
    &= \sum_{m \in \Z_{\geq M}^k \times \Z^{n-k} + y} \chi_D(|m|/\lambda) \prod_{i=1}^k O(|m|^{|d|-n-2})\\
    &= O(\lambda^{|d|-2})
    \end{align*}
    Where the last step is just due to the fact that the number of lattice points inside the $\lambda$-radius ball $B_{\lambda}$ is just $O(\lambda^n)$.

    Now we need to get rid of the restriction $m_i \geq M$ such that it's easy to compute and use the proposition below. So we define $\hat{N}_3(\lambda)$ as
    \[
    \hat{N}_3(\lambda) = \sum_{m \in  \Z^{n} + y \bigcap \R_{+}^k \times \R^{n-k}} \chi_D(|m|/\lambda) \prod_{i=1}^k C_im_i^{d_i-1}
    \]
    The difference between $\hat{N}_2(\lambda)$ and $\hat{N}_3(\lambda)$ is just
    \[
    \hat{N}_3(\lambda) - \hat{N}_2(\lambda) = \sum_{m \in  (\Z^{n} + y) \bigcap \bigcup_{j=1}^k H_j} \chi_D(|m|/\lambda) \prod_{i=1}^k C_im_i^{d_i-1}
    \]
    where $H_j = \{ x \in \R_{+}^k \times \R^{n-k} : x_j \in [0,y_j+M)\}$, for $j = 1, \cdots, k$. Actually the difference of $\hat{N}_3(\lambda) $ and $\hat{N}_2(\lambda) $ are very similar to the difference of $\hat{N}_1(\lambda) $ and $\hat{N}(\lambda) $, we could estimate them in the same way. We will elaborate the estimate of the first one because it is easier.
    \[
    \prod_{i=1}^k C_i m_i^{d_i-1} = O(|m|^{|d|-n-d_j+1}) \quad x \in H_j
    \]
    Hence 
    \[
    \sum_{m \in  (\Z^{n} + y) \bigcap H_j} \chi_D(|m|/\lambda) \prod_{i=1}^k C_im_i^{d_i-1} = O(\lambda^{|d|-d_j})
    \]
    just by naive estimate that the number of lattice point $\sum_{m \in  (\Z^{n} + y) \bigcap H_j} \chi_D(|m|/\lambda)$ is of $O(\lambda^{n-1})$. Since all $d_j \geq 2$, $O(\lambda^{|d|-d_j})$ is better than $O(\lambda^{|d|-2})$, so we have 
    \begin{equation}
        \hat{N}_3(\lambda) - \hat{N}_2(\lambda) = O(\lambda^{|d|-2})
    \end{equation}
    Combine all the above reduction, we have 
    \begin{equation}
        \hat{N}(\lambda) - \hat{N}_3(\lambda) = O(\lambda^{|d|-2})
    \end{equation}
    If we could prove that $\hat{N}_3(\lambda)$ satisfy
    \begin{equation}
        \hat{N}_3(\lambda) = \frac{|B_{|d|}|}{(2\pi)^{|d|}} \vol(M) \lambda^{|d|} + O(\lambda^{|d|-1-\frac{n-1}{n+1}}).
    \end{equation}
    We can finish the proof. Now what we need is just to prove the following proposition.
\end{proof}

\begin{proposition}\label{Weightedlattice}
    Let $\R_{+}$ denote the nonnegative real numbers and let $y\in \R^n$. Consider a multi-index $d = (d_1,d_2,\cdots,d_k,1,1,\cdots,1) \in \N^n$ with $d_i \geq 2$ for $i = 1,\cdots,k$. Then we have a weighted lattice points counting estimates
    \begin{equation}\label{lattice1}
        \sum_{\substack{m \in (\Z^n + y) \cap \R^k_+ \times\R^{n-k} \\ |m| \leq \lambda}} m_1^{d_1-1} \cdots m_k^{d_k - 1} = \lambda^{|d|} \int_{B \cap \R^k_+ \times \R^{n-k}} x_1^{d_1-1} \cdots x_k^{d_k-1} \, dx + E(\lambda)
    \end{equation}
    where $E(\lambda) = O( \lambda^{|d|-1-\frac{n-1}{n+1}})$ uniformly in y. Also we have the following annulus estimates 
    \begin{equation}\label{lattice2}
        \sum_{\substack{m \in (\Z^n + y) \cap \R^k_+ \times\R^{n-k} \\ \lambda \leq |m| \leq \lambda +c/\lambda}} m_1^{d_1-1} \cdots m_k^{d_k - 1} = O(\lambda^{|d| - 2})
    \end{equation}
\end{proposition}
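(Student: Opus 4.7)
The plan is to prove \eqref{lattice1} by Poisson summation on the shifted lattice $\Z^n + y$ after mollifying the indicator of the ball, following the classical Hlawka strategy for the unweighted lattice count in a sphere, and to derive \eqref{lattice2} from the same sandwich construction. The key structural observation that lets the Hlawka method survive the polynomial weight $W(x) := x_1^{d_1-1}\cdots x_k^{d_k-1}$ is that each $d_i - 1 \geq 1$, so $W$ vanishes continuously on every bounding hyperplane $\{x_i=0\}$ of $\R^k_+ \times \R^{n-k}$. Consequently, $G(x) := W(x)\, \mathbb{1}_{\R^k_+ \times \R^{n-k}}(x)$, extended by zero, is globally continuous on $\R^n$, and the only genuine discontinuity of $F_\lambda(x) := G(x)\chi_{B_\lambda}(x)$ is the jump across the sphere $\{|x|=\lambda\}$.

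First I would mollify. Fix a nonnegative radial $\eta \in C_0^\infty(\R^n)$ of mass one supported in the unit ball, and set $\eta_\delta(x) = \delta^{-n}\eta(x/\delta)$ for a scale $\delta > 0$ to be optimized. The sandwich
\[
\chi_\lambda^{-} := \chi_{B_{\lambda-\delta}} * \eta_\delta \; \leq \; \chi_{B_\lambda} \; \leq \; \chi_{B_{\lambda+\delta}} * \eta_\delta =: \chi_\lambda^{+},
\]
together with $G \geq 0$, gives bracketing sums $S^\pm(\lambda) := \sum_{m \in \Z^n + y} G(m)\chi_\lambda^\pm(m)$ whose gap $S^+(\lambda) - S^-(\lambda)$ is a weighted lattice sum over an annulus of width $O(\delta)$, which by a direct Riemann-sum comparison is $O(\delta\lambda^{|d|-1})$ (volume $\sim\delta\lambda^{n-1}$ times max weight $\sim\lambda^{|d|-n}$) provided $\delta \gtrsim 1/\lambda$.

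Next I would apply Poisson summation,
\[
S^\pm(\lambda) = \sum_{\xi \in \Z^n} \widehat{G\chi_\lambda^\pm}(\xi)\, e^{2\pi i y \cdot \xi}.
\]
The $\xi = 0$ term equals $\int G \cdot \chi_\lambda^\pm \, dx = \lambda^{|d|}\int_{B \cap \R^k_+ \times \R^{n-k}} W + O(\delta\lambda^{|d|-1})$, providing the main asymptotic. For $\xi \neq 0$, rescaling $x = \lambda u$ yields $\widehat{G\chi_\lambda^\pm}(\xi) = \lambda^{|d|}\, \widehat{G H_\lambda^\pm}(\lambda\xi)$, where $H_\lambda^\pm$ is the unit-ball indicator mollified at scale $\delta/\lambda$. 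Using the Bessel-type decay $|\widehat{G\chi_{B_1}}(\zeta)| \lesssim (1+|\zeta|)^{-(n+1)/2}$, which comes from the nonvanishing curvature of the unit sphere applied to $G\chi_{B_1}$ (whose only essential jump is across $|u|=1$), together with the rapid decay of $\hat\eta(\delta\xi)$ for $|\xi| \gtrsim 1/\delta$, I obtain the per-frequency bound $|\widehat{G\chi_\lambda^\pm}(\xi)| \lesssim \lambda^{|d|-(n+1)/2}|\xi|^{-(n+1)/2}$ on $1 \leq |\xi| \lesssim 1/\delta$. Summing contributes $O(\lambda^{|d|-(n+1)/2}\delta^{-(n-1)/2})$, and balancing against the mollification loss $\delta\lambda^{|d|-1}$ gives the optimal choice $\delta \sim \lambda^{-(n-1)/(n+1)}$ and total error $O(\lambda^{|d|-1-(n-1)/(n+1)})$, which is \eqref{lattice1}. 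The estimate \eqref{lattice2} follows by running the same sandwich at scale $\delta \sim 1/\lambda$: the main-term contribution to the annulus of width $c/\lambda$ is exactly $O(\lambda^{|d|-2})$, and the Fourier contribution at this thin scale is absorbed into the same quantity (possibly with a harmless $\log$ loss that does not affect the downstream application to Theorem \ref{zollweyl}).

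The main technical obstacle is justifying the Bessel decay of $\widehat{G\chi_{B_1}}$ rigorously, because $G$ is only continuous (not smooth) at the coordinate hyperplanes $\{x_i = 0\}$, so the usual stationary-phase argument on the sphere, which requires a smooth amplitude, does not apply directly. I would handle this by inserting a further smooth cutoff isolating a thin neighborhood of each hyperplane: on this neighborhood $W$ vanishes to order $d_i - 1 \geq 1$, so its contribution reduces to a lower-dimensional lattice sum of strictly lower total degree that is absorbed into the target error, while on the complement $G$ is smooth and one can integrate by parts in the direction normal to the unit sphere to produce the required $(n+1)/2$ decay. The uniformity in $y$ is built in automatically, since the Poisson bound on $\sum_{\xi \neq 0}|\widehat{G\chi_\lambda^\pm}(\xi)|$ is independent of $y$.
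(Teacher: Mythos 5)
Your overall strategy is the same as the paper's: mollify the ball indicator at a scale $\delta$ to be optimized, apply Poisson summation on the shifted lattice, isolate the main term from $\xi=0$, bound the nonzero frequencies using decay of the Fourier transform of the weighted ball indicator, and balance $\delta \lambda^{|d|-1}$ against $\delta^{-(n-1)/2}\lambda^{|d|-(n+1)/2}$ to get $\delta\sim\lambda^{-(n-1)/(n+1)}$. The mollification, the sandwich inequality, and the optimization are all in line with what the paper does.

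However, there is a genuine gap in the central Fourier estimate. The pointwise bound $|\widehat{G\chi_{B_1}}(\zeta)| \lesssim (1+|\zeta|)^{-(n+1)/2}$ that you rely on is \emph{false} once $n\geq 4$ and some $d_i=2$. The function $G$ has a $C^{d_i-2}$ kink across each hyperplane $\{x_i=0\}$; for $d_i=2$ this is a jump in the first derivative, and along the dual coordinate axis $\zeta=(R,0,\dots,0)$ one computes (e.g.\ by reducing to the one-dimensional integral $\int_0^1 x_1(1-x_1^2)^{(n-1)/2}e^{-2\pi iRx_1}\,dx_1$) that $\widehat{G\chi_{B_1}}$ decays only like $|\zeta|^{-2}$, which is strictly worse than $|\zeta|^{-(n+1)/2}$ for $n\geq 4$. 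You acknowledge the non-smoothness of $G$ as an obstacle and propose a cutoff near each hyperplane, claiming the near-hyperplane contribution ``reduces to a lower-dimensional lattice sum of strictly lower total degree.'' But this is not worked out: the cutoffs live in physical space while the failure you need to repair is a directional slowdown of Fourier decay in frequency space, and switching from Poisson summation to a direct lattice count on part of the domain would require a genuinely different bookkeeping that you have not supplied. As stated, the per-frequency bound and the subsequent summation are not justified.

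The paper sidesteps this by never claiming a pointwise bound. Instead, Lemma~\ref{fourier estimate} proves the \emph{averaged} estimate
\[
\lambda^{|d|}\sum_{m\in\Z^n\cap RQ\setminus 0} |\widehat{\chi_B F}(\lambda m)| = O(R^{(n-1)/2}\lambda^{|d|-(n+1)/2}),
\]
which is all that the Poisson sum requires. The mechanism is the factorization $\chi_B F = \chi_B\tilde F$ with $\tilde F$ a compactly supported version of $F$, so $\widehat{\chi_B F}=\widehat{\chi_B}*\widehat{\tilde F}$; then $|\widehat{\chi_B}(\xi)|\lesssim\langle\xi\rangle^{-(n+1)/2}$ carries the sphere curvature, while $|\widehat{\tilde F}(\xi)|\lesssim\prod_i\langle\xi_i\rangle^{-2}$ captures exactly the coordinatewise limited regularity of the weight. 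The lattice sum of the convolution is then estimated by a split into $\eta\in 2\lambda R Q$ versus $\eta\notin 2\lambda R Q$, and the lower-dimensional directions (where your pointwise bound fails) are automatically handled because $\prod\langle\eta_i\rangle^{-2}$ is a product and a few large $\eta_i$ kill the contribution. If you want to salvage your approach you should replace the pointwise claim with this averaged bound and the convolution decomposition; as it stands, the argument for $n\geq 4$ does not close. The treatment of \eqref{lattice2} is also only sketched in your proposal, but that is a secondary matter since the annulus estimate needed downstream is readily absorbed into the same error scale.
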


    For clarity, we set
    \[
    F(x) = \prod_{i=1}^k \chi_{[0,\infty)} x_i^{d_i-1}
    \]
    Then the equation \ref{lattice1} and \ref{lattice2} can be written as $I(\lambda),J(\lambda)$
    \[
    I(\lambda) = \sum_{m \in \Z^n + y } \chi_{B(\lambda)}(m)F(m) = \lambda^{|d|}\int_B F(x)dx +E(\lambda)
    \]
    and 
    \[
    J(\lambda) = \sum_{m \in \Z^n + y } \chi_{A(\lambda)}(m)F(m) = O(\lambda^{|d|-2})
    \]
    where $A(\lambda) = \{x: \lambda \leq |x| \leq \lambda +c/\lambda\}$ is just the small $\lambda$ annulus.

    Let $\rho$ be a smooth, nonnegative function supported in $B \subset \R^n$ with $\int_{\R^n}\rho(x)dx =1 $. For all $\epsilon > 0$, we set $\epsilon^{-n}\rho(\epsilon^{-1}x)$. It is not hard to see that $\rho_{\epsilon}$ is also a smooth function supported in the ball of radius $\epsilon$. And we have $\int_{\R^n}\rho_{\epsilon}(x)dx =1$. $\rho_{\epsilon}$ is our mollifier so we could define the mollified sum as the following
    \[
    I_{\epsilon} (\lambda) = \sum_{m \in \Z^n + y } \chi_{B(\lambda)}(m)*\rho_{\epsilon}(|m|)F(m) = \lambda^{|d|}\int_B F(x)dx +E_{\epsilon}(\lambda)
    \]
    We define $C_d = \int_B F(x)dx$. Note that 
    \[
    I_{\epsilon}(\lambda -\epsilon) \leq I(\lambda) \leq I_{\epsilon}(\lambda+\epsilon)
    \]
    Hence we have 
    \[
    E_{\epsilon}(\lambda-\epsilon) - C_d(\lambda^{|d|} - (\lambda -\epsilon)^{|d|} \leq E(\lambda) \leq  E_{\epsilon}(\lambda+\epsilon) + C_d((\lambda+\epsilon)^{|d|} - \lambda^{|d|}
    \]
    If we can prove that 
    \begin{equation}\label{Error}
        E_{\epsilon}(\lambda) \lesssim \epsilon^{-\frac{n-1}{2}} \lambda^{|d| - \frac{n+1}{2}} + \epsilon\lambda^{|d|-1}
    \end{equation}
    then it is easy to show that
    \begin{equation}
        E(\lambda) \lesssim \epsilon^{-\frac{n-1}{2}} \lambda^{|d| - \frac{n+1}{2}} + \epsilon\lambda^{|d|-1}
    \end{equation}
    We could optimize this formula by setting 
    \begin{equation}
        \epsilon = \lambda ^{-\frac{n-1}{n+1}}
    \end{equation}
    With this, we get $E(\lambda) \sim O(\lambda^{|d|-1-\frac{n-1}{n+1}})$, which help us finish the proof of the first part.

    Before using Poisson summation formula, we want to make some reductions. First, we could exchange the order of multiplication and convolution in the sum, which is 
    \[
    \tilde{I}_{\epsilon} (\lambda) = \sum_{m \in \Z^n + y } (\chi_{B(\lambda)}F)*\rho_{\epsilon}(m) = C_d\lambda^{|d|} +\tilde{E}_{\epsilon}(\lambda)
    \]
    we claim that
    \begin{equation}\label{Error2}
        |\tilde{E}_{\epsilon}(\lambda) - E_{\epsilon}(\lambda)|= |\tilde{I}_{\epsilon} (\lambda) -I_{\epsilon} (\lambda)| = O(\epsilon\lambda^{|d|-1})
    \end{equation}
    which will deduce \eqref{Error} if we can show 
    \begin{equation}
        \tilde{E}_{\epsilon}(\lambda) \lesssim \epsilon^{-\frac{n-1}{2}} \lambda^{|d| - \frac{n+1}{2}} + \epsilon\lambda^{|d|-1}
    \end{equation}

The following lemma will give us \eqref{Error2}

\begin{lemma}\label{commutator}
\[
|\tilde{I}_{\epsilon} (\lambda) -I_{\epsilon} (\lambda)| = O(\epsilon\lambda^{|d|-1})
\]
\end{lemma}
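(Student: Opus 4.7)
The plan is to express $\tilde I_\epsilon(\lambda) - I_\epsilon(\lambda)$ pointwise at each lattice node as a convolution commutator and then apply a mean value estimate before summing. By the definition of convolution, for each $m \in \Z^n + y$,
\[
(\chi_{B(\lambda)} * \rho_\epsilon)(m)\, F(m) - \bigl((\chi_{B(\lambda)} F) * \rho_\epsilon\bigr)(m) = \int \chi_{B(\lambda)}(m-z)\, \rho_\epsilon(z)\,[F(m) - F(m-z)]\, dz,
\]
and since $\rho_\epsilon$ is supported in $\{|z| \le \epsilon\}$, only lattice points with $\dist(m, B(\lambda)) \le \epsilon$ contribute.

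Next, I would use the assumption $d_i \ge 2$ for $i \le k$ to show that $F$ is continuous on $\R^n$, since each factor $x_i^{d_i - 1}$ vanishes on $\{x_i = 0\}$, and that its distributional gradient satisfies the pointwise bound
\[
|\nabla F|(x) \lesssim G(x) := \sum_{i=1}^k (x_i)_+^{d_i-2} \prod_{\substack{j \le k \\ j \ne i}} (x_j)_+^{d_j-1}.
\]
The fundamental theorem of calculus applied to $F$ along the segment from $m-z$ to $m$ (on each smooth piece of $F$) gives $|F(m) - F(m-z)| \le \epsilon\, G^*(m)$, where $G^*(m) := \sup_{|w-m|\le \epsilon} G(w)$. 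Summing the resulting bound over $m \in (\Z^n+y) \cap B(\lambda + \epsilon)$ yields
\[
|\tilde I_\epsilon(\lambda) - I_\epsilon(\lambda)| \lesssim \epsilon \sum_{m \in (\Z^n+y) \cap B(\lambda+\epsilon)} G^*(m).
\]
Since $G$ is homogeneous of degree $|d| - n - 1$ in the first $k$ coordinates and constant in the remaining ones, a standard integral comparison gives $\sum G^*(m) \lesssim \int_{B(2\lambda)} G(x)\, dx \lesssim \lambda^{|d|-n-1}\cdot \lambda^n = \lambda^{|d|-1}$, delivering the claimed $O(\epsilon\, \lambda^{|d|-1})$ bound.

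The main obstacle is the mild non-smoothness of $F$ at the coordinate hyperplanes $\{x_i = 0\}$, most notably when some $d_i = 2$, where $\partial_i F$ has a jump discontinuity but remains bounded. I would handle this by splitting the lattice sum into an interior region where every relevant $m_i$ is comfortably larger than $\epsilon$, so the classical mean value inequality applies directly, and a thin boundary layer near the hyperplanes where $F$ itself is small (each missing factor forces an extra $\epsilon^{d_i-1}$); in the latter region one can simply bound $|F(m) - F(m-z)|$ by $|F(m)| + |F(m-z)|$ and check directly that the resulting sum is $O(\epsilon\, \lambda^{|d|-1})$. The argument also uses the local integrability of $G$ near the hyperplanes, which is guaranteed precisely by $d_i - 2 \ge 0$.
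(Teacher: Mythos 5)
Your proposal is correct and follows essentially the same approach as the paper. Both arguments rest on rewriting the difference at each lattice point as a convolution commutator, invoking a Lipschitz-type bound for $F$ of size $(1+|m|)^{|d|-n-1}$ to extract a factor of $\epsilon$, and then summing over lattice points in $B(\lambda+\epsilon)$ to land on $O(\epsilon\lambda^{|d|-1})$. The paper simply records the bound $|F(x)-F(z)|\lesssim (1+|x|^{|d|-n-1})|x-z|$ for $|x-z|\le 1$ directly and sums; your version makes the gradient structure of $F$ explicit via $G(x)$ and spells out the boundary-layer case near the hyperplanes $\{x_i=0\}$, but since $d_i\ge 2$ guarantees $F$ is globally Lipschitz with the same one-line growth bound, that extra case split is not needed.
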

\begin{proof}
    F is Lipschitz continuous such that
    \[
        |F(x) - F(z)| \leq C(1 + |x|^{|d|-n-1}) |x - z| \qquad \text{if $|x - z| \leq 1$}
    \]
    \begin{align*}
        |\chi_{\lambda B} *\rho_\epsilon(x) F(x) - (\chi_{\lambda B} F) * \rho_\epsilon(x)| &= \left| \int_{\lambda B} \rho_\epsilon(x - z) (F(x) - F(z)) \, dz \right| \\
        &\leq \int_{\R^n} \rho_\epsilon(x - z) |F(x) - F(z)| \, dz \\
        &\leq C(1 + |x|^{|d|-n-1})\int_{\R^n} \rho_\epsilon(x - z) |x - z| \, dz\\
        &= C \epsilon (1 + |x|^{|d|-n-1}) \int_{\R^n} \rho(z) |z| \, dz.
    \end{align*}
    Moreover since both $(\chi_{\lambda B} * \rho_\epsilon) F$ and $(\chi_{\lambda B} F) *\rho_\epsilon$ are supported on $|x| \leq \lambda + \epsilon$,
    \[
        |I_\epsilon(\lambda) - \tilde I_\epsilon(\lambda)| \lesssim \epsilon \sum_{\substack{m \in \Z^n + y \\ |m| \leq \lambda + \epsilon}} (1 + |x|^{|d|-n-1}) = O(\epsilon \lambda^{|d|-1}).
    \]
\end{proof}

By the Poisson summation formula,
\begin{align*}
    \tilde E_\epsilon(\lambda) &= \sum_{m \in \Z^n + y} (\chi_{\lambda B} F)*\rho_\epsilon(m) - C_d \lambda^{|d|}\\
    &= \sum_{m \in \Z^n} e^{2\pi i\langle y,m \rangle} \widehat{\chi_{\lambda B} F}(m) \widehat \rho(\epsilon m) - C_d \lambda^{|d|}\\
    &= \sum_{m \in \Z^n \setminus 0} e^{2\pi i\langle y,m \rangle} \widehat{\chi_{\lambda B} F}(m) \widehat \rho(\epsilon m)\\
    &\hspace{8em} + \int_{\R^n} (\chi_{\lambda B} F)*\rho_\epsilon(x) \, dx - \int_{\R^n} \chi_{\lambda B}(x) F(x) \, dx.
\end{align*} 
The difference of the integrals on the last line is bounded by
\begin{equation}\label{poisson main term}
    \int_{\R^n} \int_{\R^n} \rho_\epsilon(x-z) |\chi_{\lambda B}(x) F(x) - \chi_{\lambda B}(z) F(z) | \, dz \, dx
\end{equation}
We cut the outer integral into two domains, $|x| \leq \lambda - \epsilon$ and $\lambda - \epsilon < |x| \leq \lambda$. The former contributes
\begin{multline*}
    \int_{|x| \leq \lambda - \epsilon} \int_{\R^n} \rho_\epsilon(x-z) |F(x) - F(z) | \, dz \, dx\\
    \lesssim \epsilon \int_{|x| \leq \lambda - \epsilon} (1 + |x|)^{|d|-n-1} \, dx = O(\epsilon \lambda^{|d|-1})
\end{multline*}
by the same argument as in the proof of Lemma \ref{commutator}. If $\lambda - \epsilon < |x| \leq \lambda$, the inner integral in \eqref{poisson main term} is $O(\lambda^{|d|-n})$. Hence, the latter contributes
\begin{multline*}
    \int_{\lambda - \epsilon < |x| \leq \lambda} \int_{\R^n} \rho_\epsilon(x-z) |\chi_{\lambda B}(x) F(x) - \chi_{\lambda B}(z) F(z) | \, dz \, dx\\
    \lesssim \int_{\lambda - \epsilon < |x| \leq \lambda} \lambda^{|d|-n} \, dx = O(\epsilon \lambda^{|d|-1}),
\end{multline*}
and so $\eqref{poisson main term}$ is $O(\epsilon \lambda^{|d|-1})$. We are finally left with
\[
    \tilde E_\epsilon(\lambda) = \sum_{m \in \Z^n \setminus 0} e^{2\pi i\langle y,m \rangle} \widehat{\chi_{\lambda B} F}(m) \widehat \rho(\epsilon m) + O(\epsilon \lambda^{|d|-1}).
\]

Since $F$ is homogeneous of degree $|d|-n$, we have
\[
    \widehat{\chi_{\lambda B} F}(m) = \lambda^{|d|} \widehat{\chi_B F}(\lambda m)
\]
Hence, the following proposition will complete our proof.

\begin{proposition} \label{poisson sum proposition}
With everything as above,
\[
    \lambda^{|d|} \sum_{m \in \Z^n \setminus 0} |\widehat{\chi_{B} F}(\lambda m)| |\widehat \rho(\epsilon m)| = O(\epsilon^{-\frac{n-1}{2}} \lambda^{|d| - \frac{n+1}{2}}).
\]
\end{proposition}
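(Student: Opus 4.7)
The plan is to reduce the proposition to the pointwise Fourier decay bound
\[
|\widehat{\chi_B F}(\xi)| \lesssim (1+|\xi|)^{-(n+1)/2},
\]
and then carry out a standard dyadic lattice sum using the Schwartz decay of $\widehat{\rho}$.

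For the pointwise bound, I would proceed via stationary phase on the spherical boundary. Since $d_i \geq 2$, the weight $F$ vanishes to order at least one on each coordinate hyperplane $\{x_i=0\}$, so $\chi_B F$ is continuous, and its principal singularity is the jump across the curved spherical portion of $\partial(B\cap(\R^k_+\times\R^{n-k}))$. One integration by parts in the radial direction rewrites
\[
\widehat{\chi_B F}(\xi) = \frac{1}{2\pi i |\xi|^2}\int_{S^{n-1}} F(\omega)(\xi\cdot\omega)\, e^{-2\pi i\xi\cdot\omega}\, dS(\omega) + (\text{volume remainder in } \nabla F),
\]
where the remainder can be iteratively reduced by further integration by parts, using the vanishing of $F$ at the hyperplanes to keep the boundary terms under control. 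Because $S^{n-1}$ has nonvanishing Gaussian curvature, the only critical points of the phase $\omega \mapsto \xi\cdot\omega$ on $S^{n-1}$ are $\pm\xi/|\xi|$, both non-degenerate, and the boundary integral is $O(|\xi|^{-(n-1)/2})$ by standard stationary phase. Combined with the $|\xi|^{-1}$ prefactor this gives the target $|\xi|^{-(n+1)/2}$ decay.

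With the pointwise bound in hand, the sum in the proposition is controlled by
\[
\lambda^{|d|}\sum_{m \neq 0} |\widehat{\chi_B F}(\lambda m)||\widehat{\rho}(\epsilon m)| \lesssim \lambda^{|d|-(n+1)/2}\sum_{m \neq 0} |m|^{-(n+1)/2}(1+\epsilon|m|)^{-N}
\]
for arbitrarily large $N$. The Schwartz decay of $\widehat{\rho}$ makes the tail $|m| > \epsilon^{-1}$ negligible, and integral comparison for the head gives
\[
\sum_{1\leq|m|\leq\epsilon^{-1}} |m|^{-(n+1)/2} \lesssim \int_1^{\epsilon^{-1}} r^{(n-3)/2}\,dr \lesssim \epsilon^{-(n-1)/2}
\]
(the case $n=1$ being trivial, as no improvement is required there). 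Multiplying through yields $O(\epsilon^{-(n-1)/2}\lambda^{|d|-(n+1)/2})$, as claimed.

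The main obstacle is the decay bound in the first step, where the limited regularity of $F$ across the coordinate hyperplanes rules out a direct application of the usual stationary-phase expansion for Fourier transforms of smooth surface measures. The saving grace is the vanishing of $F$ to order $d_i - 1 \geq 1$ at $\{x_i = 0\}$, which ensures that the iterated integration-by-parts scheme produces boundary terms of the claimed size and volume remainders that are strictly better than the leading term. Once this decay is established, the remaining lattice summation is routine.
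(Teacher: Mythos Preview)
Your pointwise decay bound
\[
|\widehat{\chi_B F}(\xi)| \lesssim (1+|\xi|)^{-(n+1)/2}
\]
is \emph{false} in general, so the argument breaks down at the first step. Take $n\ge 4$, $k=1$, $d_1=2$, so that $F(x)=x_1\chi_{[0,\infty)}(x_1)$ and $(n+1)/2>2$. For $\xi=(\xi_1,0,\dots,0)$ one integrates out the transverse variables and obtains
\[
\widehat{\chi_B F}(\xi_1,0,\dots,0)=c_n\int_0^1 x_1(1-x_1^2)^{(n-1)/2}\,e^{-2\pi i x_1\xi_1}\,dx_1.
\]
The integrand vanishes at both endpoints but has a nonzero one--sided derivative at $x_1=0$ (equal to $c_n$), so its Fourier transform behaves like $c\,\xi_1^{-2}$, not $|\xi_1|^{-(n+1)/2}$. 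The ``saving grace'' you invoke---vanishing of $F$ to order $d_i-1$ on $\{x_i=0\}$---buys exactly $|\xi_i|^{-d_i}$ decay in the direction normal to that hyperplane, and this is weaker than $|\xi|^{-(n+1)/2}$ whenever $d_i<(n+1)/2$. The flat faces have no curvature, so stationary phase on the sphere cannot rescue them; their contribution is governed purely by the order of vanishing of the amplitude.

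The paper sidesteps the need for any isotropic pointwise bound. It introduces a smooth cutoff $\tilde F(x)=F(x)\prod_i\beta(x_i)$ agreeing with $F$ on $B$, so that $\widehat{\chi_B F}=\widehat{\chi_B}*\widehat{\tilde F}$, and exploits the \emph{anisotropic} product decay
\[
|\widehat{\tilde F}(\eta)|\lesssim \prod_{i=1}^n\langle\eta_i\rangle^{-2}.
\]
The lattice sum is then bounded by inserting this convolution, interchanging sum and integral, and estimating $\sum_{m\in RQ\setminus 0}\langle\lambda m-\eta\rangle^{-(n+1)/2}$ against $\prod_i\langle\eta_i\rangle^{-2}\,d\eta$. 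The point is that the directions in which your pointwise bound fails (those aligned with coordinate axes) form a lower--dimensional set, and the product weight $\prod_i\langle\eta_i\rangle^{-2}$ is precisely what suppresses their contribution in the integrated estimate. Your isotropic reduction throws this structure away; to fix the argument you would need a directionally dependent bound of the form $|\widehat{\chi_B F}(\xi)|\lesssim \langle\xi\rangle^{-(n+1)/2}+\sum_i\langle\xi_i\rangle^{-d_i}\prod_{j\neq i}\langle\xi_j\rangle^{-N}$ (or similar), and then redo the lattice sum separately for each piece---at which point you are essentially reproducing the paper's convolution argument.
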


The Proposition finally yields
\[
    \tilde E_\epsilon(\lambda) = O(\epsilon \lambda^{|d|-1} + \epsilon^{-\frac{n-1}{2}} \lambda^{|d|-\frac{n+1}{2}}).
\]
As noted previously, we optimize by setting $\epsilon = \lambda^{-\frac{n-1}{n+1}}$.
The proposition will hinge on the estimates for $|\widehat{\chi_B F}|$, below.

\begin{lemma} \label{fourier estimate}
    Let $Q = \{ \xi \in \R^n : \sup_{i}|\xi_i| \leq 1 \}$. For all real $R \geq 1$,
    \[
        \lambda^{|d|} \sum_{m \in \Z^n \cap R Q \setminus 0} |\widehat{\chi_B F}(\lambda m)| = O(R^{\frac{n-1}{2}}\lambda^{|d|-\frac{n+1}{2}}).
    \]
\end{lemma}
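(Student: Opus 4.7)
My plan is to deduce Lemma~\ref{fourier estimate} from the pointwise Fourier decay bound
\[
|\widehat{\chi_B F}(\xi)| \lesssim |\xi|^{-(n+1)/2}, \qquad \xi \neq 0,
\]
combined with a dyadic sum over $\Z^n \cap RQ \setminus \{0\}$. Granting this bound, I partition the lattice into dyadic shells $\{m : |m| \sim 2^j\}$ for $0 \leq j \lesssim \log_2 R$, each containing $O(2^{jn})$ lattice points; a single shell contributes $2^{jn}(\lambda 2^j)^{-(n+1)/2} = \lambda^{-(n+1)/2}\, 2^{j(n-1)/2}$ to the sum. For $n \geq 2$ the resulting geometric series is dominated by its largest term $R^{(n-1)/2}\lambda^{-(n+1)/2}$, and multiplying by $\lambda^{|d|}$ gives the claim.

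The heart of the argument is therefore the pointwise estimate. I would apply the divergence theorem to $\int_B F(x) e^{-2\pi i\langle x,\xi\rangle}\,dx$ with the vector field $v = \xi/|\xi|^2$, obtaining
\[
\widehat{\chi_B F}(\xi) = \frac{-1}{2\pi i|\xi|}\int_{S^{n-1}} F(\omega)\langle\omega,\xi/|\xi|\rangle e^{-2\pi i\langle\omega,\xi\rangle}\,d\sigma(\omega) + \frac{1}{2\pi i|\xi|^2}\int_B \langle\xi,\nabla F\rangle e^{-2\pi i\langle x,\xi\rangle}\,dx.
\]
Set $d_{\min} := \min_i d_i \geq 2$. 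The boundary amplitude $F(\omega)\langle\omega,\xi/|\xi|\rangle$ is $C^{d_{\min}-2}$-smooth on $S^{n-1}$, since each factor $\omega_i^{d_i-1}\chi_{[0,\infty)}(\omega_i)$ is $C^{d_i-2}$ globally, and the phase $\langle\omega,\xi/|\xi|\rangle$ on the sphere has two nondegenerate stationary points $\pm \xi/|\xi|$ at which the Gaussian curvature is $1$. Classical stationary phase therefore provides $O(|\xi|^{-(n-1)/2})$ decay for the boundary integral, which combined with the $|\xi|^{-1}$ prefactor yields the desired $|\xi|^{-(n+1)/2}$.

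The main obstacle is the interior remainder. Since every $d_i \geq 2$ the gradient $\nabla F$ is locally integrable (the would-be delta contributions $x_i^{d_i-1}\delta(x_i)$ vanish), so a trivial bound gives $O(|\xi|^{-2})$ for the interior term. This suffices when $n \leq 3$ but is weaker than $|\xi|^{-(n+1)/2}$ when $n \geq 4$. For higher $n$ I would iterate the divergence-theorem step: each iteration reduces one exponent $d_i-1$ by one, contributes an extra $|\xi|^{-1}$ factor, and creates new boundary contributions on $\partial B$ (each handled as above by curved-boundary stationary phase) and, once the exponent $d_i-1$ is exhausted, on the flat facets $\{x_i = 0\}\cap B$. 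These flat-facet contributions are potentially problematic in the direction $\xi \parallel e_i$, but they are controlled by the vanishing of the remaining amplitude $\prod_{j\neq i} x_j^{d_j-1}$ on those facets together with stationary phase along the curved codimension-two edges $\partial B \cap \{x_i = 0\}$. After finitely many iterations the integrand becomes bounded and the residual bulk contribution is negligible, completing the argument.
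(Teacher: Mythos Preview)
Your argument rests on the pointwise estimate $|\widehat{\chi_B F}(\xi)| \lesssim |\xi|^{-(n+1)/2}$, but this bound is false once $n \geq 4$. Take $n = k = 4$ with every $d_i = 2$, so that $F(x) = (x_1)_+ (x_2)_+ (x_3)_+ (x_4)_+$. Along the coordinate direction $\xi = t e_1$, integrating out $x_2, x_3, x_4$ gives
\[
\widehat{\chi_B F}(t e_1) = c \int_0^1 x_1 (1 - x_1^2)^3 \, e^{-2\pi i t x_1} \, dx_1.
\]
The integrand, extended by zero to $\R$, has a first-order corner at $x_1 = 0$ (it vanishes there but its one-sided derivative equals $1$), so two integrations by parts yield $\widehat{\chi_B F}(t e_1) \sim c'/t^2$ as $t \to \infty$. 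This is strictly slower than the required $t^{-5/2}$. In other words, the flat-facet contribution you flag as ``potentially problematic in the direction $\xi \parallel e_i$'' genuinely dominates: whenever some $d_i = 2$, the corner of $F$ across $\{x_i = 0\}$ forces $|\widehat{\chi_B F}(\xi)| \gtrsim |\xi|^{-2}$ along the $e_i$-axis, independently of $n$. Your iterated divergence-theorem scheme cannot repair this, because each iteration in the $x_i$-variable exhausts the exponent $d_i - 1 = 1$ after a single step and then produces exactly the flat integral over $B \cap \{x_i = 0\}$ whose phase is constant in the $e_i$-direction.

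The paper avoids this obstruction by never seeking an isotropic pointwise bound. Instead it exploits the tensor structure of $F$: setting $\tilde F(x) = F(x)\prod_i \beta(x_i)$ with $\beta \in C_0^\infty$ equal to $1$ on $[-1,1]$, one has $\chi_B F = \chi_B \tilde F$ and hence $\widehat{\chi_B F} = \widehat{\chi_B} * \widehat{\tilde F}$, where two integrations by parts in each coordinate give the \emph{anisotropic} estimate $|\widehat{\tilde F}(\eta)| \lesssim \prod_i \langle \eta_i \rangle^{-2}$. The slow directions for $\widehat{\tilde F}$ are precisely the coordinate axes, but these are a measure-zero set; after inserting the convolution into the lattice sum and exchanging sum with the $\eta$-integral, the curved-boundary decay $|\widehat{\chi_B}(\cdot)| \lesssim \langle\cdot\rangle^{-(n+1)/2}$ controls the answer. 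The missing idea in your proposal is that the singularities of $F$ along the coordinate hyperplanes must be tracked directionally rather than absorbed into a single radial decay rate.
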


Since $\widehat \rho$ is Schwartz, we bound it by $|\widehat \rho(\xi)| \leq C_N\min(1,|\xi|^{-N})$ for a suitably large $N$. Assuming the lemma, we use a diadic decomposition to write the sum in Proposition \ref{poisson sum proposition} as
\begin{align*}
    &\lambda^{|d|} \sum_{m \in Z^n \cap \frac{1}{\epsilon} Q \setminus 0} |\widehat{\chi_B F}(\lambda m)| |\widehat \rho(\epsilon m)| + \lambda^{|d|} \sum_{j = 0}^\infty \sum_{m \in Z^n \cap \frac{2^j}{\epsilon} (2Q \setminus Q)} |\widehat{\chi_B F}(\lambda m)||\widehat{\rho}(\epsilon m)| \\
    &\lesssim \lambda^{|d|} \sum_{m \in Z^n \cap \frac{1}{\epsilon} Q \setminus 0} |\widehat{\chi_B F}(\lambda m)| + \lambda^{|d|} \sum_{j=0}^\infty 2^{-Nj} \sum_{m \in Z^n \cap \frac{2^j}{\epsilon} (2Q \setminus Q)} |\widehat{\chi_B F}(\lambda m)|\\
    &\lesssim \epsilon^{-\frac{n-1}{2}} \lambda^{|d| - \frac{n+1}{2}}
\end{align*}
The proof of Lemma \ref{fourier estimate} is all that remains.

\begin{proof}
Let $\beta \in C_0^\infty(\R)$ with $\beta \equiv 1$ on $[-1,1]$ and $\supp \beta \subset [-2,2]$. We write
\[
    \tilde F(x) = F(x) \prod_{i = 1}^n \beta(x_i) = \prod_{i = 1}^n \beta(x_i) \begin{cases}
        x_i^{d_i - 1} \chi_{[0,\infty)}(x_i) & \text{if $i \leq k$,} \\
        1 & \text{ if $i > k$.}
    \end{cases}
\]
Integration by parts twice in each of the $x_i$ variables yields a bound
\begin{equation} \label{F tilde bound}
    |\widehat{\tilde F}(\xi)| \lesssim \prod_{i=1}^n \langle \xi_i \rangle^{-2}.
\end{equation}
Note $\chi_B F = \chi_B \tilde F$ and hence $\widehat{\chi_B F} = \widehat \chi_B * \widehat{\tilde F}$. Using the well-known fact that
\[
    |\widehat \chi_B(\xi)| \lesssim \langle \xi \rangle^{-\frac{n+1}{2}},
\]
we write
\begin{align}
    \nonumber \sum_{m \in \Z^n \cap R Q \setminus 0} |\widehat{\chi_B F}(\lambda m)| &\lesssim \sum_{m \in \Z^n \cap RQ \setminus 0} \int_{\R^n} \langle \lambda m - \eta\rangle^{-\frac{n+1}{2}} \prod_{i = 1}^n  \langle \eta_i \rangle^{-2} \, d\eta \\
    \nonumber &= \int_{2\lambda R Q} \prod_{i = 1}^n \langle \eta_i \rangle^{-2}  \sum_{m \in \Z^n \cap RQ \setminus 0}  \langle\lambda m - \eta\rangle^{-\frac{n+1}{2}} \, d\eta \\
    \label{two integrals} &\qquad + \int_{\R^n \setminus 2\lambda R Q} \prod_{i = 1}^n \langle \eta_i \rangle^{-2}  \sum_{m \in \Z^n \cap RQ \setminus 0}  \langle \lambda m - \eta\rangle^{-\frac{n+1}{2}} \, d\eta.
\end{align}
If $\eta \in 2\lambda RQ$, then by the integral test
\begin{align*}
    \sum_{m \in \Z^n \cap RQ \setminus 0} \langle \lambda m - \eta \rangle^{-\frac{n+1}{2}} &\lesssim \int_{RQ} |\lambda \xi - \eta|^{-\frac{n+1}{2}} \, d\xi \\
    &\leq \lambda^{-\frac{n+1}{2}}\int_{3RQ} |\xi|^{-\frac{n+1}{2}}\, d\xi \lesssim \lambda^{-\frac{n+1}{2}} R^{\frac{n-1}{2}}.
\end{align*}
Hence, the first integral in the last line of \eqref{two integrals} is bounded by
\[
    R^{\frac{n-1}{2}} \lambda^{-\frac{n+1}{2}} \int_{2\lambda RQ} \prod_{i = 1}^n \langle \eta_i \rangle^{-2} \, d\eta \lesssim  R^{\frac{n-1}{2}} \lambda^{-\frac{n+1}{2}}.
\]
On the other hand if $\eta \not\in 2\lambda R Q$ and $m \in RQ$, then $\langle\lambda m - \eta\rangle \approx |\eta|$ and so the second of the integrals in \eqref{two integrals} is bounded by
\begin{align*}
    &R^n \int_{\R^n \setminus 2\lambda R Q} |\eta|^{-\frac{n+1}{2}} \prod_{i = 1}^n \langle \eta_i \rangle^{-2} \, d\eta \\
    &\leq R^n \sum_{j = 1}^n \int_{\substack{|\eta_j| = \max_i |\eta_i| \\ |\eta_j| \geq 2\lambda R}} |\eta|^{-\frac{n+1}{2}} \prod_{i = 1}^n \langle \eta_i \rangle^{-2} \, d\eta \\
    &\leq R^n \sum_{j = 1}^n \int_{\substack{|\eta_j| = \max_i |\eta_i| \\ |\eta_j| \geq 2\lambda R}} |\eta_j|^{-\frac{n+5}{2}} \prod_{i \neq j} \langle \eta_i \rangle^{-2} \, d\eta \\
    &= R^n \sum_{j = 1}^n \int_{|\eta_j| \geq 2\lambda R} |\eta_j|^{-\frac{n+5}{2}} \left( \prod_{i \neq j} \int_{|\eta_i| \leq |\eta_j|} \langle \eta_i \rangle^{-2} \, d\eta_i \right) \, d\eta_j\\
    &\lesssim R^n \sum_{j = 1}^n \int_{|\eta_j| \geq 2\lambda R} |\eta_j|^{-\frac{n+5}{2}} \, d\eta_j \\
    &\lesssim R^{\frac{n-3}{2}} \lambda^{-\frac{n+3}{2}},
\end{align*}
which is better than the bound on the first integral. The lemma follows.
\end{proof}

\newpage


\bibliography{Thesis}
\bibliographystyle{abbrv}

%

\end{document}